\documentclass[preprint,12pt]{elsarticle}

\usepackage{amsfonts,amsthm,amsmath}
\usepackage{mathrsfs}

\usepackage{times}
\usepackage{amsfonts}
\usepackage{amsthm}
\usepackage{amsmath}
\usepackage{epic}
\usepackage{times}

\usepackage[margin=3cm]{geometry}
\linespread{1.1}
\usepackage{amsthm,amsmath}
\usepackage{fullpage}

\usepackage{tikz}
\usetikzlibrary{snakes}
\usepackage{soul}



\newcommand{\Nn}{{\mathbb N}}

\newcommand{\orb}{\rm orb}

\newcommand{\Ff}{{\rm Fact}}
\newcommand{\card}{{\rm card}}
\def\A{\mathbb{A}}
\def\P{\mathscr{P}}

\newtheorem{thm}{Theorem}

\newtheorem{rema}{Remark}

\newtheorem{lemma}[thm]{Lemma}
\newtheorem{corollary}[thm]{Corollary}
\newtheorem{proposition}[thm]{Proposition}
\newtheorem{question}{Question}
\newtheorem{conjecture}{Conjecture}

\newtheorem{example}{Example}
\newtheorem{definition}[thm]{Definition}
\theoremstyle{definition}

\topmargin-.5in

\begin{document}

\begin{frontmatter}

\title{On prefixal factorizations of words}

\author[label1]{Aldo de Luca}
  \ead{aldo.deluca@unina.it}

   \author[label2,label3]{Luca Q. Zamboni}
  \ead{lupastis@gmail.com}

\address[label1]{Dipartimento di Matematica e Applicazioni,
Universit\`a di Napoli Federico II, Italy}
\address[label3]{FUNDIM, University of Turku, Finland}
\address[label2]{Universit\'e de Lyon,
Universit\'e Lyon 1, CNRS UMR 5208,
Institut Camille Jordan,
43 boulevard du 11 novembre 1918,
F69622 Villeurbanne Cedex, France}

\begin{abstract}

We consider the class ${\cal P}_1$ of all infinite words $x\in \A^\omega$ over a finite alphabet $\A$ admitting a prefixal factorization, i.e., a  factorization $x= U_0 U_1U_2 \cdots $ where each  $U_i$ is a non-empty prefix of  $x.$ With each $x\in {\cal P}_1$ one naturally associates a ``derived" infinite word $\delta(x)$ which may or may not admit a prefixal factorization. 
We are interested in the class  ${\cal P}_{\infty}$ of all words $x$ of  ${\cal P}_1$ such that $\delta^n(x) \in {\cal P}_1$ for all $n\geq 1$. Our primary motivation for studying the class  ${\cal P}_{\infty}$ stems from its connection to a coloring problem on infinite words independently posed by T. Brown in \cite{BTC} and by the second author in \cite{LQZ}.  More precisely, let  ${\bf P}$ be the class of all words $x\in \A^\omega$ such that for every finite coloring  $\varphi : \A^+ \rightarrow C$ there exist $c\in C$ and a factorization $x= V_0V_1V_2\cdots  $ with $\varphi(V_i)=c$ for each $i\geq 0.$ In \cite{DPZ} we conjectured that a word $x\in {\bf P}$ if and only if $x$ is purely periodic. In this paper we show that 
  ${\bf P}\subseteq {\cal P}_{\infty},$ so in other words, potential candidates to a counter-example to our conjecture are amongst the non-periodic elements of  ${\cal P}_{\infty}.$ 
 We establish several results on the class  ${\cal P}_{\infty}$. In particular,  we show that a Sturmian word $x$ belongs to ${\cal P}_{\infty}$ if and only if  $x$ is nonsingular, i.e.,   no proper suffix of $x$  is a standard Sturmian word. 

\end{abstract}
\begin{keyword} Combinatorics on words, Prefixal factorization, Sturmian word, Coloring problems
\MSC[2010] 68R15
\end{keyword}

\journal{}

\end{frontmatter}
\section{Introduction}

 Let ${\bold P}$ denote the class of all infinite words $x\in \A^\omega$ over a finite alphabet  $\A$ such that for every finite coloring  $\varphi : \A^+\rightarrow C$ there exist $c\in C$ and a factorization  $x= V_0V_1V_2\cdots $ with $\varphi(V_i)=c$ for all $i\geq 0.$ Such a factorization is called $\varphi$-monochromatic.   In  \cite{DPZ} we conjectured:
 
 \vspace{2 mm}
 
\begin{conjecture}\label{concol}
 Let $x$ be an infinite word. Then  $x\in {\bold P}$ if and only if $x$ is (purely) periodic. 
 \end{conjecture}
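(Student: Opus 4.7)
The plan is to handle the two directions separately.

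The ``if'' direction is immediate: if $x = u^\omega$ for some $u \in \A^+$, then for every finite coloring $\varphi \colon \A^+ \to C$ the factorization $x = u \cdot u \cdot u \cdots$ is $\varphi$-monochromatic of color $\varphi(u)$, so $x \in {\bf P}$.

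For the converse I would argue by contraposition: assume $x$ is not purely periodic and try to produce a finite coloring $\varphi$ of $\A^+$ admitting no $\varphi$-monochromatic factorization of $x$. The first reduction uses the inclusion ${\bf P} \subseteq {\cal P}_{\infty}$ announced in the abstract: any counterexample must admit a prefixal factorization, and every derivative $\delta^n(x)$ must as well, so the ``first return to an initial prefix'' structure of $x$ is tightly controlled at every scale. I would then try to design the coloring from this scaffolding, for example by coloring each factor $w$ by a tuple built from its first letter, its Parikh vector modulo a small integer, and/or whether $w$ is a prefix of $x$ up to some length threshold. The aim is to force, in any $\varphi$-monochromatic factorization $x = V_0 V_1 V_2 \cdots$, the blocks $V_i$ to all coincide with a single prefix of $x$, so that the associated derived word is a power of a single letter, contradicting aperiodicity.

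The principal obstacle, and the reason the statement is still a conjecture, is that ${\cal P}_{\infty}$ genuinely contains aperiodic words, most notably the nonsingular Sturmian words identified in the paper. For those I would specialize to the Ostrowski framework associated to the slope of the Sturmian word and color each factor by (a truncated prefix of) the Ostrowski expansion of its length, the hope being that the hierarchical structure of standard Sturmian prefixes at scales dictated by the continued fraction expansion is incompatible with an infinite monochromatic concatenation. Turning this structural incompatibility into a bona fide finite coloring, and doing so uniformly for every aperiodic member of ${\cal P}_{\infty}$ (Sturmian or not), is where I expect the real difficulty to lie.
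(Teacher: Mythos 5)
There is a genuine gap here, and it is unavoidable: the statement you are trying to prove is stated in the paper as Conjecture~1 and is \emph{not} proved there. The paper's contribution is only a partial result in its direction, namely the inclusion ${\bf P}\subseteq {\cal P}_{\infty}$ (Theorem~\ref{cor:piinfty}), which narrows the search for a counterexample to the aperiodic elements of ${\cal P}_{\infty}$ but does not rule them out. Your ``if'' direction is correct and is exactly the trivial observation the authors also make (a periodic word $u^\omega$ has the monochromatic factorization $u\cdot u\cdots$). Your ``only if'' direction, however, is a research programme rather than a proof: you correctly identify the reduction to ${\cal P}_{\infty}$ and correctly identify that the hard cases are the aperiodic words surviving every derivation, but the colorings you propose (first letter, Parikh vector modulo a small integer, prefix indicators, truncated Ostrowski expansions of lengths) are offered only as candidates, with no argument that any of them forces all blocks of a monochromatic factorization to coincide. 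That final step is precisely the open problem.

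Two further points worth knowing. First, the Sturmian case that you single out as the principal obstacle is in fact already settled: the paper cites \cite{DPZ} for the fact that no Sturmian word belongs to ${\bf P}$, so the nonsingular Sturmian words in ${\cal P}_{\infty}$ (Theorem~\ref{Sturmsing}) are counterexamples to the \emph{converse} of ${\bf P}\subseteq{\cal P}_{\infty}$, not potential counterexamples to the conjecture; you would not need Ostrowski machinery from scratch, but you would need to extend whatever works there to all aperiodic members of ${\cal P}_{\infty}$, which nobody has done. Second, your proposed ``derived word is a power of a single letter'' endgame is too strong: a monochromatic factorization need not use a single prefix (the blocks only share a color, and need not even be prefixes unless the coloring is designed to force that), so even the shape of the desired contradiction needs more care. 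In short: the ``if'' half stands, the ``only if'' half is not a proof, and no proof of it exists in the paper or, to date, in the literature.
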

 
 \vspace{2 mm}
 
\noindent 
 Various partial results in support of Conjecture~\ref{concol} were obtained in \cite{DPZ, DZ, ST}.    
Given $x\in \A^\omega,$ it is natural to consider the binary coloring $\varphi:\A^+\rightarrow \{0,1\}$ defined by $\varphi(u)=0$ if $u$ is a prefix of $x$ and $\varphi(u)=1$ otherwise. Then any $\varphi$-monochromatic factorization is nothing more than a prefixal factorization of $x,$ i.e., a factorization of the form $x=U_0U_1U_2\cdots $ where each $U_i$ is a non-empty prefix of $x.$ 
Thus a first necessary condition for a word $x$ to belong to  ${\bf P}$ is that $x$ admit a prefixal factorization.  
Not all infinite words admit such a factorization including for instance the class of  square-free words  and the class of Lyndon words \cite{DPZ}.

Thus in the study of the Conjecture~\ref{concol}, one can restrict to the class of words ${\cal P}_1$  admitting a prefixal factorization. But in fact more is true. 
It is shown that if $x\in  {\cal P}_1,$ then $x$ has only finitely many distinct unbordered prefixes and admits a unique factorization in terms of its unbordered prefixes. This allows us to associate with each $x\in  {\cal P}_1$ a new infinite word $\delta(x)$ on an alphabet corresponding to the finite set of   unbordered prefixes of $x.$ In turn, the word $\delta(x)$ may or may not  admit a prefixal factorization. In case $\delta(x)\notin {\cal P}_1,$
then $\delta(x) \notin  {\bold P}$ and from this one may deduce that $x$ itself does not belong to $ {\bold P}.$
This is for instance the case of the famous Thue-Morse infinite word 
$t =t_0t_1t_2\cdots \in \{0,1\}^\omega$  where $t_n$ is defined as the sum modulo $2$ of the digits  in the binary expansion of
$n,$ \[t = 011010011001011010010\cdots \]The origins of $t$ go back to the beginning of the last century with the works of  A. Thue \cite{Th1, Th2} in which he proves amongst other things that $t$ is {\it overlap-free}, i.e., contains no word of the form $uuu'$ where $u'$ is a non-empty prefix of $u.$ 
It is readily checked that $t$ admits a prefixal factorization, in particular  $t$  may be factored uniquely as $ t =V_0V_1V_2\cdots$ where each $V_i\in\{0,01,011\}.$ On the other hand as is shown later (see Example \ref{ex:tm}), the {\it derived word} $\delta(t)$ is the square-free ternary Thue-Morse word fixed by the morphism $1\mapsto 123,$ $2\mapsto 13,$ $3\mapsto 1.$ Hence  $\delta(t)\notin  {\cal P}_1.$ This in turn implies
that $t\notin {\bold P}.$ Concretely, 
consider the coloring $\varphi ': \{0,1\}^+\rightarrow \{0,1,2\}$ defined by $\varphi '(u)=0$ if $u$ is a prefix of $t$ ending with $0,$ $\varphi '(u)=1$ if $u$ is a prefix of $t$ ending with $1,$ and $\varphi '(u)=2$ otherwise. We claim that $t$ does not admit a $\varphi '$-monochromatic factorization. In fact, suppose to the contrary that $t =V_0V_1V_2\cdots$ is a  $\varphi '$-monochromatic factorization. Since $V_0$ is a prefix of $t$, it follows that there exists $a\in \{0,1\}$ such that each $V_i$ is a
  prefix of $t$ terminating with $a.$ Pick $i\geq 1$ such that $|V_i|\leq |V_{i+1}|.$ Then  $aV_iV_i \in \makebox{Fact}(t).$ Writing
$V_i=ua,$ (with $u$ empty or in $\{0,1\}^+),$ we see $aV_iV_i=auaua $ is an overlap, contradicting that $t$ is overlap-free.  

Thus, in the study of Conjecture~\ref{concol}, one can further restrict to the subset ${\cal P}_2$ of ${\cal P}_1$  consisting of all $x\in {\cal P}_1$ for which $\delta(x) \in {\cal P}_1.$ In this case, one can define a second derived word $\delta^2(x)=\delta (\delta(x))$ which again may or may not belong to ${\cal P}_1.$ In case $\delta^2(x) \notin {\cal P}_1,$ then not only is $\delta^2(x) \notin {\bf P},$ but as we shall see neither are $\delta(x)$ and $x.$
Continuing in this way, we are led to consider  the class  ${\cal P}_{\infty}$ of all words $x$ in ${\cal P}_1$ such that $\delta^n(x) \in {\cal P}_1$ for all $n\geq 1.$ We show that 
  ${\bf P}\subset {\cal P}_{\infty},$ so in other words any potential counter-example to our conjecture is amongst the  non-periodic words belonging to  ${\cal P}_{\infty}.$  However, $ {\bf P}\neq {\cal P}_{\infty}.$ In fact, we prove in Sect.~\ref{sec:sei}  that a large class of Sturmian words (nonsingular Sturmian words) belong to $ {\cal P}_{\infty},$ while as shown  in \cite{DPZ}, no Sturmian word belongs to ${\bf P}$.

The paper is organized as follows: In Sect.~\ref{sec:2} we give a brief overview of some basic definitions and notions in combinatorics on words which are relevant to the subsequent material.  In Sect.~\ref{sec:3} we study the basic properties of words admitting a prefixal factorization and in particular show each admits a unique factorization in terms of its finite set of unbordered prefixes.  From this we define the derived word $\delta(x).$   We prove amongst other things that if $x$ is a fixed point of a morphism, then the same is true of  $\delta(x)$. 

In Sect.~\ref{sec:quattro}
we recursively define a nested sequence $\cdots \subset {\cal P}_{n+1}\subset {\cal P}_{n} \subset \cdots \subset {\cal P}_{1} $ where  ${\cal P}_{n+1}=\{x\in {\cal P}_{n}\,|\, \delta(x) \in {\cal P}_n\},$ and study some basic properties of the set ${\cal P}_\infty =\bigcap_{n\geq 1}{\cal P}_n.$

In Sect.~\ref{sec:5} we study the connection between the class ${\bf P}$ and the class ${\cal P}_{\infty}$ and show that   ${\bf P}\subset {\cal P}_{\infty}.$  We also show  that if $x\in {\cal P}_{\infty}$, then $x$ is uniformly recurrent, from which we recover a result previously proved in \cite{DPZ} via different techniques.

Sect.~\ref{sec:sei} is devoted to prefixal factorizations of Sturmian words. Any Sturmian word $x\neq aS$, where $a\in \{0,1\}$ and $S$ a standard Sturmian word, admits a prefixal factorization. 
The main result of the section is  that  a Sturmian word $x$ belongs to ${\cal P}_{\infty}$ if and only if $x$ is nonsingular, i.e.,  no proper suffix of $x$ is a standard Sturmian word.

\section{Notation and Preliminaries}\label{Prel}\label{sec:2}

Given a  non-empty set $\A,$ or {\em alphabet},  we let  $\A^*$ denote the set of
all finite words $u=u_1u_2\cdots u_n$ with $u_i\in \A.$ The
quantity $n$ is called the {\em length} of $u$ and is denoted $|u|.$
The {\em empty word}, denoted $\varepsilon,$ is the
unique element in $\A^*$ with $|\varepsilon|=0.$
We set
$\A^+=\A^*\setminus \{\varepsilon\}.$ For each word $v\in \A^+$, let $|u|_v$  denote the number of occurrences
of $v$ in $u$. In the following we suppose that the alphabet $\A$ is finite even though several results hold true
for any alphabet.

Let $u\in \A^*$. A word $v$ is a \emph{factor} of $u$ if there exist words $r$ and $s$ such that $u=rvs$; $v$ is a \emph{proper} factor if $v\neq u$. 
If $r= \varepsilon$ (resp., $s=\varepsilon$), then $v$ is called a \emph{prefix} (resp.,~a \emph{suffix}) of $u$.

Given words $u,v\in \A^+$ we say $v$ is a {\it border} of $u$ if $v$ is both a proper prefix and a proper suffix of $u.$ In case $u$ admits a border, we say $u$ is {\it bordered}. Otherwise $u$ is called {\it unbordered}. 

  Let  $\A^\omega$  denote the set of all
one-sided infinite words $x=x_0x_1\cdots $ with $x_i\in \A$, $i\geq 0$. 

Given $x\in \A^\omega,$ let
$\Ff^+ (x) =\{x_ix_{i+1}\cdots x_{i+j}\,|\, i,j\geq 0\}$ denote the set of all non-empty {\it factors} of $x$. Moreover, we set
$\Ff (x) =  \{\varepsilon\} \cup \Ff^+ (x)$.
The {\em factor complexity} of $x$ is the map $\lambda_x: \Nn\rightarrow \Nn$ defined as follows: for any $n\geq 0$
$$ \lambda_x(n) = \card( \A^n \cap \Ff (x)),$$
i.e., $\lambda_x(n)$ counts the number of distinct factors of $x$ of length $n$.
A  factor $u$ of a finite or infinite word  $x$  is called {\em right special} (resp., {\em left special}) if there exist two different letters $a$ and $b$  such that $ua$ and $ub$ (resp., $au$ and $bu$) are factors of $x$. A factor $u$ of $x$ which is right and left special is called {\em bispecial}.\

Given $x=x_0x_1x_2\cdots \in \A^\omega.$ 
A factor $u$ of  $x \in \A^\omega$ is called {\em recurrent} if $u$ occurs in $x$ an infinite number of times, and is called  {\em uniformly recurrent} if 
there exists an integer $k$ such that every  factor of $x$ of length $k$ contains an occurrence of $u$. An infinite word $x$ is called {\it recurrent} (resp., {\it uniformly recurrent}) if each of its factors is recurrent (resp., uniformly recurrent).

Let $x \in \A^{\omega}$ and ${\cal S}$ denote the {\em shift operator}. The {\em shift orbit} of $x$ is the set $\orb(x)=\{{\cal S}^k(x) \mid k\geq 0\}$,  i.e., the set of all suffixes of $x$. The {\em shift orbit closure} of $x$ is the set $\Omega(x)= \{y \in \A^{\omega} \mid \Ff  (y) \subseteq \Ff (x )\}$.

 An infinite word $x$ is called (purely) {\it periodic} if $x=u^\omega$ for some $u\in A^+,$ and is called {\it ultimately periodic} if $x=vu^\omega$ for some $v\in \A^*,$ and $u\in \A^+.$ As is well known,  an ultimately periodic word which is non-periodic is not recurrent. The word $x$ is called {\it aperiodic} if  $x$ is not ultimately periodic.
 
 We say that two finite or infinite words $x=x_0x_1\ldots $ and $y=y_0y_1\ldots $ on the alphabets $\A$ and $\A'$ respectively  are {\em word isomorphic}, or simply {\em isomorphic},  and write $x\simeq y,$
if there exists a bijection $\phi: \A\rightarrow \A'$ such that $y=\phi(x_0)\phi(x_1)\ldots.$ 

  For all definitions and notation not explicitly given in the paper, the reader is referred to the books  \cite{AS,LO,LO2}.

\section{Prefixal factorizations}\label{sec:3}

\begin{definition} We say that an infinite word $x\in \A^{\omega}$ admits  a {\em prefixal factorization}
if $x$ has a factorization
$$ x= U_0U_1U_2\cdots $$
where each $U_i$, $i\geq 0$, is a non-empty prefix of $x$.
\end{definition}

Some  properties of words having a prefixal factorization have been proved in \cite{DPZ}. We mention in particular the
following:

  \begin{lemma}\label{lemma:B} Let $x\in A^{\omega}$ be an infinite word having a prefixal factorization. Then the first letter of  $x$ is uniformly  recurrent.
\end{lemma}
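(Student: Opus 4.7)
I would denote $a := x_0$ and write $0 = a_0 < a_1 < a_2 < \cdots$ for the successive positions of $a$ in $x$, with gaps $G_j := a_{j+1}-a_j$; uniform recurrence of $a$ is equivalent to showing the $G_j$ are bounded. Let $p_0=0$ and $p_{i+1}=p_i+|U_i|$ denote the block boundaries of the prefixal factorization. Since each $U_i$ is a prefix of $x$, it starts with $a$, so every $p_i$ is an $a$-position of $x$.

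The engine of the proof is the self-similarity of the $a$-structure forced by the fact that each $U_i$ is a prefix of $x$: the letters of the block $[p_i,p_{i+1})$ agree with the prefix $x_0 x_1 \cdots x_{|U_i|-1}$. Setting $K_i := \max\{\,j : a_j<|U_i|\,\}$, the $a$-positions inside $U_i$ are exactly $p_i+a_0,\ldots,p_i+a_{K_i}$, so the successive within-block gaps are $G_0,G_1,\ldots,G_{K_i-1}$ (an initial segment of the whole gap sequence), and the boundary gap to the next block is $|U_i|-a_{K_i}$. The key estimate is
\[
|U_i|-a_{K_i}\;\le\;a_{K_i+1}-a_{K_i}\;=\;G_{K_i},
\]
which holds because $a_{K_i+1}$, being the first $a$-position $\ge |U_i|$, is at least $|U_i|$. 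Thus every boundary gap is dominated by a gap of strictly smaller index.

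With these ingredients in place, I would prove by strong induction on $J$ that $G_J\le|U_0|$. The base case $J\le K_0$ is immediate: since $a_{K_0+1}=p_1=|U_0|$, one has $G_J\le a_{J+1}\le|U_0|$. For $J>K_0$, locate $J=T_i+s$ with $i\ge1$ and $0\le s\le K_i$, where $T_i := \sum_{k<i}(K_k+1)$ is the global index of the first $a$ in block $U_i$; if $s<K_i$ then $G_J=G_s$, and if $s=K_i$ then $G_J\le G_{K_i}$. In each case the resulting index is strictly smaller than $J$ (because $T_i\ge1$), so the inductive hypothesis applies.

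The main obstacle is really just the index bookkeeping behind the self-similar decomposition of the gap sequence and the derivation of the boundary-gap inequality; once these are set up the inductive descent is essentially automatic, and the bound $G_J\le|U_0|$ gives uniform recurrence of $a$ with window $|U_0|$.
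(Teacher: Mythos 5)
Your argument is correct and complete. Note that the paper itself does not prove this lemma: it is imported from \cite{DPZ} without proof, so there is no in-paper argument to match against; what you have written is a valid self-contained replacement. The key points all check out: each block boundary $p_i$ is an occurrence of $a=x_0$ (so $a$ occurs infinitely often and the gap sequence is defined); inside block $i$ the occurrence pattern of $a$ replicates that of the prefix of length $|U_i|$, so interior gaps are literally earlier gaps $G_s$ with $s<K_i$; and the boundary gap $|U_i|-a_{K_i}$ is dominated by $G_{K_i}$ because $a_{K_i+1}\ge |U_i|$ by the maximality defining $K_i$. Since $T_i\ge T_1=K_0+1\ge 1$ for $i\ge 1$, the index produced in either case is strictly smaller than $J$, so the strong induction closes and yields the uniform bound $G_J\le |U_0|$. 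One could streamline the bookkeeping by inducting on positions rather than on gap indices: show by strong induction on $m$ that every window $[m,m+|U_0|)$ contains an occurrence of $a$, since such a window either straddles a boundary $p_i$ (done) or sits inside a single block $U_i$ with $i\ge 1$ and hence translates to an identical window of the prefix starting at $m-p_i<m$. That phrasing avoids introducing $K_i$, $T_i$ and the gap decomposition, but it is the same self-similarity descent; your version additionally makes explicit that the set of gap values of the $a$-positions is exactly the set of gap values occurring within the first block, which is mildly more information than the lemma requires.
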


Given $x=x_0x_1x_2\cdots \in \A^\omega,$ we let $UP(x)$ denote the set of all (non-empty) unbordered prefixes of $x.$

\begin{proposition}\label{PF} Let $x=x_0x_1x_2\cdots \in \A^\omega.$ The following conditions are equivalent:

\begin{enumerate}
\item $x$ admits a prefixal factorization.

\item $x$ admits a unique factorization of the form $x=U_0U_1U_2\cdots $ with $U_i\in UP(x)$ for each $i\geq 0.$ 

\item $\card(UP(x))<+\infty.$ 

\end{enumerate}
\end{proposition}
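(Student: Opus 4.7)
The plan is to establish the cyclic chain of implications $(2) \Rightarrow (1) \Rightarrow (3) \Rightarrow (2)$. The implication $(2) \Rightarrow (1)$ is immediate, since every $U_i \in UP(x)$ is by definition a non-empty prefix of $x$. For $(1) \Rightarrow (3)$, I would show that every $U \in UP(x)$ is a prefix of $V_0$, where $x = V_0 V_1 V_2 \cdots$ is the given prefixal factorization; since $V_0$ has only finitely many prefixes, this yields finiteness of $UP(x)$. Given $U \in UP(x)$ with $|U| > |V_0|$, choose the unique $k \geq 1$ such that $|V_0 \cdots V_{k-1}| < |U| \leq |V_0 \cdots V_k|$ and set $s = |U| - |V_0 \cdots V_{k-1}| \in (0, |V_k|]$. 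Then the length-$s$ suffix of $U$ coincides with the length-$s$ prefix of $V_k$, which is itself a prefix of $x$. In the subcase $s = |V_k|$, the word $V_k$ is both a proper prefix (since $V_0 \cdots V_{k-1} \neq \varepsilon$) and a proper suffix of $U$; in the subcase $0 < s < |V_k|$, the length-$s$ prefix of $V_k$ plays the analogous role. Either way $U$ is bordered, contradicting $U \in UP(x)$.

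For the existence half of $(3) \Rightarrow (2)$, I would first show by strong induction on $|w|$ that every non-empty finite prefix $w$ of $x$ factors into elements of $UP(x)$. If $w$ is unbordered then $w \in UP(x)$ and the one-term factorization suffices; otherwise $w$ has a non-empty proper border $u$, so $w = pu$ with both $p$ and $u$ prefixes of $x$ strictly shorter than $w$. The inductive hypothesis then supplies factorizations of $p$ and $u$ into elements of $UP(x)$, whose concatenation factorizes $w$. To lift this to an infinite factorization of $x$, I would invoke K\"onig's lemma on the tree whose nodes are the valid partial factorizations $(U_0, \ldots, U_{k-1})$ of prefixes of $x$ with each $U_i \in UP(x)$: this tree is finitely branching (since $UP(x)$ is finite) and has nodes of arbitrarily large depth (since arbitrarily long prefixes of $x$ factor), so it admits an infinite branch, yielding $x = U_0 U_1 U_2 \cdots$ with each $U_i \in UP(x)$.

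For uniqueness, suppose $x = U_0 U_1 \cdots = U'_0 U'_1 \cdots$ are two factorizations into $UP(x)$ first differing at some index $\ell$, and assume $|U_\ell| < |U'_\ell|$ after relabeling. Repeating the case analysis of $(1) \Rightarrow (3)$ on the unbordered word $U'_\ell$, now using the common tail $U_\ell U_{\ell+1} \cdots$ in place of the factorization $V_0 V_1 \cdots$, produces a non-empty proper prefix of $x$ that is also a proper suffix of $U'_\ell$---a border of $U'_\ell$, contradicting $U'_\ell \in UP(x)$. The main obstacle is the induction in the existence step: one must recognize that any bordered prefix $w$ of $x$ splits as $w = pu$ with both $p$ and $u$ strictly shorter prefixes of $x$, so that the inductive hypothesis applies to both; once this structural observation is in place, both the K\"onig step and the uniqueness argument follow the same border-extraction template established in $(1) \Rightarrow (3)$.
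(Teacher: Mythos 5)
Your proposal is correct, and the implications $(2)\Rightarrow(1)$ and $(1)\Rightarrow(3)$, as well as the uniqueness argument, match the paper's proof in substance (the paper also extracts a border of the longer block from the factorization of the common tail). Where you genuinely diverge is the existence half of $(3)\Rightarrow(2)$. The paper builds the factorization greedily: $U_0$ is the longest unbordered prefix of $x$, and $U_{n+1}$ is the longest unbordered prefix of $x$ that is a prefix of $(U_0\cdots U_n)^{-1}x$; the crux is showing the construction never gets stuck, which the paper does by taking the prefix $V$ of length $|U_0\cdots U_n|+1$, noting it is bordered, and arguing that its \emph{shortest} border must be a single letter. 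You instead prove by strong induction on length that \emph{every} non-empty prefix $w$ of $x$ factors over $UP(x)$ --- via the decomposition $w=pu$ with $u$ a proper border, so that both $p$ and $u$ are strictly shorter prefixes of $x$ --- and then pass to an infinite factorization by K\"onig's lemma on the finitely branching tree of partial factorizations. Both routes are sound. Yours buys a cleaner local step (the border decomposition is immediate, whereas the paper's ``shortest border has length $1$'' claim takes a moment to verify) and as a by-product establishes the finite-prefix factorization fact that the paper only asserts in passing afterwards; the cost is non-constructiveness and the appeal to K\"onig's lemma, whereas the paper's greedy construction exhibits the canonical factorization explicitly and identifies $U_0$ as the longest unbordered prefix, a normalization that is reused later (e.g.\ in the definition of $N(x)$ and in Section 6). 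One small point worth making explicit in your K\"onig step: the tree has nodes of arbitrarily large depth because the blocks have length bounded by $\max\{|U| : U\in UP(x)\}$, so a factorization of a prefix of length $n$ has at least $n/\max\{|U| : U\in UP(x)\}$ terms; alternatively, it suffices that the tree is infinite, which is immediate since distinct prefixes yield distinct nodes.
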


\begin{proof}
Let us first prove that if $x$ admits a factorization $x=U_0U_1U_2\cdots $ with $U_i\in UP(x),$ then such a factorization is necessarily unique. 
Indeed, suppose that there
 exists a different  factorization $ x = U'_0U'_1U'_2\cdots $ with $U'_i\in UP(x).$ Let  $n\geq 0$ be the first integer such that     $U_n \neq U'_n$.  Without loss of generality we suppose that $|U_n|>|U'_n|$. We can write $U_n= U'_nU'_{n+1}\cdots U'_{n+p}\xi$, with $p\geq 0$ and $\xi$ prefix of $U'_{n+p+1}.$  Hence, $U_n$ is  bordered,  a contradiction.
 
  We will  now show that $3.\Rightarrow 2.\Rightarrow 1.\Rightarrow 3.$ 
  
$3.\Rightarrow 2$. We begin by assuming $\card(UP(x))<+\infty$ and show how to construct a factorization of $x$ in terms of unbordered prefixes of $x.$   We define recursively an infinite sequence
$U_0,U_1,U_2,\ldots \in UP(x)$ such that  $U_0U_1\cdots U_n$ is a prefix of $x$ for each $n\geq 0,$ $U_0$ is the longest unbordered prefix of $x,$ and for $n\geq 1,$  $U_n$ is the longest unbordered prefix  of $x$ which is a prefix of  $(U_0\cdots U_{n-1})^{-1}x$.  For $n=0$ we simply set $U_0$ to be the longest unbordered prefix of $x.$ Note $U_0$ is well defined since $\card(UP(x))<+\infty.$ For the inductive step, let $n\geq 0$ and suppose we have defined $U_0,\ldots ,U_n$ with the required properties. We show how to construct $U_{n+1}.$ Let $V$ be the prefix of $x$ of length $|U_0\cdots U_n|+1.$ Then since $|V|>|U_0|$ it follows that $V$ is bordered. Let $v$ denote the shortest border of $V.$  Then $v\in UP(x)$ and by induction hypothesis that $U_n$ is unbordered it follows that $|v|=1.$ In other words, $(U_0 \cdots U_n)^{-1}x$ begins with  an unbordered prefix of $x.$ Thus we define $U_{n+1}$ to be the longest unbordered prefix of $x$ which is a prefix of $(U_0 \cdots U_{n})^{-1}x.$
It follows immediately that $U_0 \cdots U_{n}U_{n+1}$ is a prefix of $x.$ Thus we have shown that  $3.\Rightarrow 2.$. 

 $2.\Rightarrow 1.$ This implication is trivially true. 
 
 $1.\Rightarrow 3.$  If $x=V_0V_1V_2\cdots$ is a prefixal factorization of $x,$ then each prefix of $x$ longer than $|V_0|$ is necessarily bordered. Hence, $\card(UP(x))\leq |V_0|.$ 
\end{proof} 

A direct proof of the equivalence of conditions 1. and 3. in the preceding proposition is in \cite[Lemma 3.7]{DPZ}. We also observe that an infinite word having a finite number of unbordered factors is purely periodic \cite{HHZ}.

Let $\P_1$ denote the set of all infinite words $x=x_0x_1x_2\cdots $ over any finite alphabet satisfying 
any one of the three equivalent conditions given in Proposition~\ref{PF}. 
For $x\in \P_1$ let
\begin{equation}\label{*} x=U_0U_1U_2\cdots \end{equation}
be the unique factorization of $x$ with  $U_i\in UP(x)$ for $i\geq 0.$ Let $UP'(x)=\{U_i\,|\,i\geq 0\}\subseteq UP(x),$ and set $n_x=\card(UP'(x)).$

Given distinct elements $U,V\in UP'(x),$ we write $U \prec V$ if  
\[\min\{i\,|\, U_i=U\}<\min\{i\,|\, U_i=V\},\]
in other words if the first occurrence of $U$ in (1) is before the first occurrence of $V$ in (1).
Let 
\[\phi: \{1,2,\ldots ,n_x\}\rightarrow UP'(x)\] denote the unique order preserving bijection. We define $\delta (x) \in \{1,2,\ldots ,n_x\}^\omega$ by
\[\delta(x)=\phi^{-1}(U_0)\phi^{-1}(U_1)\phi^{-1}(U_2)\cdots .\]
Clearly $\phi (\delta(x))=x.$ We call  $\delta(x)$ the {\em derived word} of $x$ with respect to the morphism induced by the bijection $\phi: \{1,2, \ldots, n_x\} \rightarrow UP'(x)$.

\begin{example}\label{fibe} \rm{ Let $\A =\{0, 1\}$ and let $f$ be the Fibonacci word over $\A$, 
$$ f= 010010100100101001010010010100\cdots, $$
which is fixed by the morphism (Fibonacci morphism) defined by $0\mapsto 01$, $1\mapsto 0$. It is readily verified that $UP(f)= UP'(f)= \{0, 01\}$ and that
$01\prec 0$. One has $n_f= 2$ and $\phi(1)= 01, \phi(2) =0$. The unique factorization of $f$ in terms of $UP(f)$ is
$$f = (01)(0)(01)(01)(0)(01)(0)(01)(01)(0)(01)(01)(0)(01)(01)(0) \cdots$$
Hence,
$$\delta(f)= 1211212112112121121 \cdots,$$
and $\delta(f) \simeq f$.
}
\end{example}

\begin{example}\label{tribe}\rm{Let $x=121312112131212131211213121312112131212131211213\cdots$
denote the Tribonacci word fixed by the morphism defined by $1\mapsto 12, 2\mapsto 13, 3\mapsto 1.$
It is readily verified that $UP'(x)=UP(x)=\{1,12,1213\},$ and  $1213 \prec 12 \prec 1.$ It follows that
$n_x=3$ and $\phi(1)=1213,$ $\phi(2)=12,$ and $\phi(3)=1.$ The unique factorization of $x$ in terms of $UP(x)$ begins with
\[x=(1213)(12)(1)(1213)(12)(1213)(12)(1)(1213)(1213)(12)(1)(1213)(12)(1213)(12)(1)(1213)\cdots\]
and hence 
\[\delta(x)=123121231123121231\cdots.\]
}
\end{example}
\begin{rema}\label{01fib}{\rm In general, if  $x\in {\cal P}_1$, the set  $UP'(x)$ may be a proper subset of $UP(x).$ For instance, consider $x=10f$ where $f$ is the Fibonacci word. Then it is readily verified that $UP(x)=\{1,10,100\}$ while $UP'(x)=\{10,100\}.$ }
\end{rema}

We extend $\phi$ to a morphism $\phi:\{1,2,\ldots ,n_x\}^+\rightarrow UP'(x)^+.$

\begin{lemma}\label{injective} The morphism  $\phi:\{1,2,\ldots ,n_x\}^+\rightarrow UP'(x)^+$ is injective.
\end{lemma}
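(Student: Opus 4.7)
The plan is to prove that $UP'(x)$ is a \emph{suffix code} (no element is a proper suffix of another), and then deduce injectivity of $\phi$ from this standard structural property.

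For the first step, I would take any two distinct $u,v \in UP'(x) \subseteq UP(x)$. Both are prefixes of $x$, so one is a prefix of the other; assume without loss of generality that $|u|<|v|$, so $u$ is a proper prefix of $v$. If $u$ were also a (proper) suffix of $v$, then $u$ would be a border of $v$, contradicting the fact that $v$ is unbordered. Hence $u$ is not a suffix of $v$, and since $u,v$ were arbitrary, no element of $UP'(x)$ is a proper suffix of another. Thus $UP'(x)$ is a suffix code.

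For the second step, I would show that a suffix code generates an injective morphism. Suppose $\phi(a_1 a_2 \cdots a_m) = \phi(b_1 b_2 \cdots b_n)$ with $a_i,b_j \in \{1,\ldots,n_x\}$. Both $\phi(a_m)$ and $\phi(b_n)$ are suffixes of this common finite word; hence one is a suffix of the other. The suffix-code property then forces $\phi(a_m) = \phi(b_n)$, and since the restriction of $\phi$ to letters is a bijection onto $UP'(x)$, we conclude $a_m = b_n$. Cancelling this common rightmost factor and iterating (formally, inducting on $|\phi(a_1\cdots a_m)|$) yields $m=n$ and $a_i = b_i$ for all $i$, i.e., $\phi$ is injective.

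There is no real obstacle here; the whole argument rests on the single observation that ``unbordered prefix of $x$'' is incompatible with being a proper suffix of another prefix of $x$, which is exactly the suffix-code condition restricted to $UP'(x)$. The injectivity of $\phi$ on $\{1,\ldots,n_x\}^+$ is then an immediate consequence of the classical fact that suffix codes are codes.
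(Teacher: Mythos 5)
Your proof is correct, and it reaches the conclusion by a slightly different route than the paper. The paper's proof is a one-liner that leans on the uniqueness of factorizations over $UP(x)$ established inside the proof of Proposition~\ref{PF}: given $\phi(v)=\phi(v')$, read both factorizations from the left, locate the first index where they disagree, and observe that the longer block then contains the shorter one as a proper prefix and terminates in a prefix of $x$ of smaller length, producing a border of an unbordered word. You instead isolate the relevant structural property as a \emph{suffix code} condition --- any two elements of $UP'(x)$ are prefix-comparable, so a proper-suffix relation between them would be a border --- and then peel blocks off from the right, which is the standard unique-decipherability argument for suffix codes. Both arguments ultimately rest on the same incompatibility between ``unbordered'' and ``proper prefix and proper suffix of another prefix of $x$,'' but your version is self-contained (it does not require re-deriving the finite analogue of the uniqueness statement, which Proposition~\ref{PF} only proves for the infinite word $x$ itself), and it makes explicit a reusable fact about $UP(x)$ that the paper leaves implicit. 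The only stylistic remark is that you could equally well have observed that $UP'(x)$ is a prefix code relative to words that factor over it, which is closer to the paper's left-to-right reading; either direction works.
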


\begin{proof}Suppose $w=\phi(v)=\phi (v')$ with $v,v' \in \{1,2,\ldots ,n_x\}^+.$ Then $w$ factors as a product of elements in $UP'(x).$ Since any such factorization is necessarily unique, it follows that $v=v'.$
\end{proof}

While, as is readily verified,  every prefix $w$ of $x$ may be written uniquely as a product of unbordered prefixes of $x,$ in general, as we saw in the example of $10f$ (see Remark \ref{01fib}), it may not be possible to factor $w$ over $UP'(x).$ However, the following lemma shows that if $w$ occurs in a prefixal factorization of $x,$ then $w=\phi(v)$ for some factor $v$ of $\delta(x).$

\begin{lemma}\label{fact} Let $x=V_0V_1V_2\cdots$ be a prefixal factorization of $x.$ Then there exists a (unique) factorization $\delta(x)=v_0v_1v_2\cdots$ such that $\phi(v_i)=V_i$ for each $i\geq 0.$
\end{lemma}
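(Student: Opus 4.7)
The plan is to exploit the uniqueness of the factorization of $x$ into elements of $UP(x)$ guaranteed by Proposition~\ref{PF}(2) in order to align the $V_i$'s with consecutive blocks of the canonical factorization $x=U_0U_1U_2\cdots$. As observed just before the statement of the lemma, every prefix $w$ of $x$ admits a unique decomposition $w=P_1P_2\cdots P_r$ with each $P_j\in UP(x)$. Applied to each block $V_i$, this gives $V_i = P_{i,1}P_{i,2}\cdots P_{i,r_i}$ with $P_{i,j}\in UP(x)$. Concatenating these block-wise decompositions yields
\[
x = P_{0,1}\cdots P_{0,r_0}\,P_{1,1}\cdots P_{1,r_1}\,P_{2,1}\cdots P_{2,r_2}\cdots,
\]
an infinite factorization of $x$ whose factors all lie in $UP(x)$.

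By the uniqueness in Proposition~\ref{PF}(2), this factorization must coincide term by term with $x=U_0U_1U_2\cdots$. Hence there exist indices $0=j_0<j_1<j_2<\cdots$ (with $j_{i+1}-j_i=r_i$) such that
\[
V_i = U_{j_i}U_{j_i+1}\cdots U_{j_{i+1}-1}\qquad\text{for every } i\geq 0.
\]
In particular each $P_{i,j}$ coincides with some $U_k$ and therefore belongs to $UP'(x)$. Setting
\[
v_i = \phi^{-1}(U_{j_i})\phi^{-1}(U_{j_i+1})\cdots\phi^{-1}(U_{j_{i+1}-1}) \in \{1,\ldots,n_x\}^+,
\]
one has $\phi(v_i)=V_i$, while the concatenation $v_0v_1v_2\cdots$ is precisely $\phi^{-1}(U_0)\phi^{-1}(U_1)\phi^{-1}(U_2)\cdots = \delta(x)$, giving the desired factorization.

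Uniqueness is immediate from Lemma~\ref{injective}: if $\delta(x)=v_0'v_1'v_2'\cdots$ is another factorization with $\phi(v_i')=V_i=\phi(v_i)$ for each $i$, then the injectivity of $\phi$ on $\{1,\ldots,n_x\}^+$ forces $v_i'=v_i$. I do not anticipate a substantial obstacle; the entire argument is an alignment combining the uniqueness of the canonical factorization (Proposition~\ref{PF}(2)) with the injectivity of $\phi$ (Lemma~\ref{injective}). The subtle point worth flagging is that the decomposition of an individual $V_i$ over $UP(x)$ need not a priori lie in $UP'(x)^+$ (the example $10f$ in Remark~\ref{01fib} shows that $UP'(x)$ can be a proper subset of $UP(x)$); it is precisely the global condition that the $V_i$'s concatenate to all of $x$ which, through the uniqueness statement, forces each $P_{i,j}$ to lie in $UP'(x)$.
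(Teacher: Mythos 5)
Your proof is correct, and it reaches the same pivotal fact as the paper---that the cut points of the prefixal factorization $x=V_0V_1V_2\cdots$ form a subset of the cut points of the canonical factorization $x=U_0U_1U_2\cdots$ over $UP(x)$---but by a different mechanism. The paper argues directly on positions: if some cut $s_i=\sum_{k<i}|V_k|$ fell strictly inside an occurrence of $U_j$, then a prefix of some $V_k$ (hence a prefix of $x$, hence a prefix of $U_j$) would also be a proper suffix of $U_j$, contradicting that $U_j$ is unbordered. You instead refine each block $V_i$ into its unique factorization over $UP(x)$---the fact stated as ``readily verified'' in the paragraph immediately preceding the lemma---and then invoke the uniqueness clause of Proposition~\ref{PF} to identify the concatenated refinement with $U_0U_1U_2\cdots$. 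Both routes are sound: yours additionally needs the existence of a factorization of every finite prefix over $UP(x)$ (which does hold, by induction on the length using the shortest border), whereas the paper's needs only the unborderedness of the $U_j$. A small merit of your version is that it makes explicit what the paper compresses into ``the result now follows'': each refined piece $P_{i,j}$ is literally some $U_k$, hence lies in $UP'(x)$ and not merely in $UP(x)$, which is exactly what is required for each $v_i$ to be a factor of $\delta(x)$. Your uniqueness argument via Lemma~\ref{injective} is the intended one.
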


\begin{proof} Let $x= U_0U_1U_2 \cdots $ be the factorization of $x$ in unbordered prefixes. Define $r_0=0$ and $r_n=\sum_{i=0}^{n-1}|U_i|.$ In other words, $r_n$ corresponds to the position of $U_n$ in the preceding factorization. Similarly we define $s_0=0$ and $s_n=\sum_{i=0}^{n-1}|V_i|.$
Then we claim that $\{s_n\,|\,n\geq 0\}\subseteq \{r_n\,|\,n\geq 0\}.$  In fact, suppose to the contrary
that there exist indices $i,j$ such that $r_j<s_i<r_{j+1}.$ This implies that there exists $k\geq i,$ such that a prefix of $V_k$ (possibly all of $V_k)$ is a proper suffix of $U_j.$ This is a contradiction since $U_j$ is unbordered.
Thus we have shown that any prefixal factorization of $x$ is also a factorization of $x$ viewed as an infinite word over the alphabet $UP'(x),$ in other words. The result now follows. 
\end{proof}

\noindent Combining the two previous lemmas we obtain:

\begin{corollary}Let $x=V_0V_1V_2\cdots$ be a prefixal factorization of $x.$ Then for each $i\geq 0$ there exists a unique factor $v_i$ of $\delta(x)$ such that $\phi(v_i)=V_i.$ 
\end{corollary}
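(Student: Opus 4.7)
The plan is to observe that this corollary is essentially the immediate synthesis of the two preceding lemmas: Lemma~\ref{fact} supplies existence, and Lemma~\ref{injective} supplies uniqueness. First, I would invoke Lemma~\ref{fact} applied to the given prefixal factorization $x = V_0 V_1 V_2 \cdots$ to obtain a factorization $\delta(x) = v_0 v_1 v_2 \cdots$ with $\phi(v_i) = V_i$ for every $i \geq 0$. By the very meaning of a factorization, each block $v_i$ occupies a consecutive range of positions inside $\delta(x)$, and therefore $v_i$ is in particular a factor of $\delta(x)$. This takes care of the existence half of the statement.

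For uniqueness, suppose $v_i'$ is any factor of $\delta(x)$ (not necessarily one of the blocks produced above) that also satisfies $\phi(v_i') = V_i$. Then $\phi(v_i') = \phi(v_i)$, and since both $v_i, v_i' \in \{1, 2, \ldots, n_x\}^+$, the injectivity of $\phi$ established in Lemma~\ref{injective} forces $v_i' = v_i$ as words in $\{1,\dots,n_x\}^+$. Thus the factor $v_i$ of $\delta(x)$ with $\phi(v_i) = V_i$ is unique.

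I do not foresee any genuine obstacle: the substantive content has already been extracted in Lemmas~\ref{fact} and~\ref{injective}, and the present corollary is simply the convenient packaging of their combination. The value of stating it in this form is that it lets one transport a prefixal factorization of $x$ to a well-defined block structure on $\delta(x)$, which will presumably be iterated when one passes to $\delta^n(x)$ in the definition of the class $\mathcal{P}_\infty$.
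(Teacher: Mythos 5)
Your proposal is correct and matches the paper exactly: the paper derives this corollary by "combining the two previous lemmas," namely Lemma~\ref{fact} for the existence of the blocks $v_i$ and Lemma~\ref{injective} for their uniqueness via injectivity of $\phi$. Nothing is missing.
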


\noindent As another consequence:

\begin{corollary} Suppose $x\in \P_1$ is a fixed point of a morphism $\tau:\A^+\rightarrow \A^+.$ Then there exists a morphism $\tau': \{1,2,\ldots ,n_x\}^+\rightarrow \{1,2,\ldots ,n_x\}^+$ fixing $\delta(x)$  such that
$\phi\circ \tau'=\tau\circ \phi.$ 
\end{corollary}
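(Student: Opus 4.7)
The plan is to construct $\tau'$ letter-by-letter from the action of $\tau$ on the unbordered-prefix factorization of $x$, and then verify compatibility with $\phi$ and the fixed-point property.

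First I would apply $\tau$ to the canonical factorization $x = U_0 U_1 U_2 \cdots$ with $U_i \in UP'(x)$. Since $\tau(x) = x$ and each $U_i$ is a prefix of $x$, each $\tau(U_i)$ is a prefix of $\tau(x)=x$, so
\[
x \;=\; \tau(U_0)\,\tau(U_1)\,\tau(U_2)\cdots
\]
is a prefixal factorization of $x$. By Lemma~\ref{fact}, there is a unique factorization $\delta(x) = v_0 v_1 v_2 \cdots$ with $\phi(v_i) = \tau(U_i)$ for every $i \geq 0$.

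Next, write $\delta(x) = \delta_0 \delta_1 \delta_2 \cdots$ with $\delta_n \in \{1,2,\ldots,n_x\}$, so that $\phi(\delta_n) = U_n$. I would define $\tau'$ on the alphabet $\{1,2,\ldots,n_x\}$ as follows: for each letter $i$, pick any index $n$ with $\delta_n = i$ (such an $n$ exists because every symbol of $\{1,\ldots,n_x\}$ occurs in $\delta(x)$, by the very definitions of $UP'(x)$ and $\phi$) and set $\tau'(i) = v_n$. To see this is well-defined, if $\delta_n = \delta_m = i$ then $U_n = \phi(\delta_n) = \phi(\delta_m) = U_m$, hence $\phi(v_n) = \tau(U_n) = \tau(U_m) = \phi(v_m)$, and by the injectivity of $\phi$ (Lemma~\ref{injective}) we conclude $v_n = v_m$. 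This is the only delicate point; everything else is formal.

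Finally, I would check the two required identities. For the semiconjugacy, on each letter $i$ pick any $n$ with $\delta_n = i$ to obtain $\phi(\tau'(i)) = \phi(v_n) = \tau(U_n) = \tau(\phi(i))$; since both $\phi\circ\tau'$ and $\tau\circ\phi$ are morphisms, they agree on all of $\{1,\ldots,n_x\}^+$. For the fixed-point property, by the very definition of $\tau'$ we have $\tau'(\delta_n) = v_n$ for every $n \geq 0$, so
\[
\tau'(\delta(x)) \;=\; \tau'(\delta_0)\,\tau'(\delta_1)\,\tau'(\delta_2)\cdots \;=\; v_0 v_1 v_2 \cdots \;=\; \delta(x),
\]
completing the argument.
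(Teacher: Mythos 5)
Your proposal is correct and follows essentially the same route as the paper's proof: apply $\tau$ to the canonical factorization over $UP'(x)$, invoke Lemma~\ref{fact} to lift the resulting prefixal factorization to a factorization of $\delta(x)$, and read off $\tau'$. The only difference is that you explicitly verify well-definedness of $\tau'$ via the injectivity of $\phi$ (Lemma~\ref{injective}), a point the paper leaves implicit; this is a welcome addition but not a change of method.
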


\begin{proof}  Applying $\tau$ to the unique factorization  $x=U_0U_1U_2\cdots$ with $U_i\in UP'(x),$ we obtain a prefixal factorization $x=\tau(U_0)\tau(U_1)\tau(U_2)\cdots.$  
Writing $\delta(x)=a_0a_1a_2\cdots$ with $a_i\in \{1,2,\ldots ,n_x\}$ and $\phi (a_i)=U_i,$ by Lemma \ref{fact} there exists a unique factorization $\delta(x)=v_0v_1v_2\cdots$ such that $\phi(v_i)= \tau(U_i)= \tau(\phi(a_i))$ for each $i \geq 0$.
The result now follows by defining $\tau'(a_i)=v_i.$ 
\end{proof}

\begin{example}\label{ex:try}\rm{As we saw in Example \ref{tribe}, the Tribonacci word $x$ is in $\P_1.$ It follows from the previous corollary that $\delta(x)$ is also a fixed point of a morphism $\tau':\{1,2,3\}^+\rightarrow \{1,2,3\}^+$ which we can compute using the relation $\tau'=\phi^{-1}\circ \tau \circ \phi$ where $\tau$ denotes the Tribonacci morphism defined by  $1\mapsto 12, 2\mapsto 13, 3\mapsto 1.$
So 
\[1\overset{\phi}\mapsto 1213\overset{\tau}\mapsto 1213121\overset{\phi^{-1}}\mapsto 123\]
\[2\overset{\phi}\mapsto 12\overset{\tau}\mapsto 1213\overset{\phi^{-1}}\mapsto 1\]
\[3\overset{\phi}\mapsto 1\overset{\tau}\mapsto 12\overset{\phi^{-1}}\mapsto 2.\]
Thus $\delta(x)$ is fixed by the morphism defined by $1\mapsto 123, 2\mapsto 1, 3\mapsto 2.$ 
It may be verified that in this example, $\delta(x)$ has the same factor complexity as $x,$ namely it contains $2n+1$ distinct factors of each length $n\geq 1.$ However, unlike $x$ which has a unique right and left special factor of each length, $\delta(x)$ has a unique left special factor of each length, and two right special factors of each length.  It is also readily verified that $\delta(x)\in \P_1.$ In fact $UP'(\delta(x))=UP(\delta(x))=\{1,12,123\},$ and $123\prec 12\prec 1.$ Thus we obtain the infinite word $\delta^2(x) =\delta(\delta(x))\in \{1,2,3\}^\omega$ which is a fixed point of a morphism $\tau''$ verifying $\tau''=\phi'^{-1}\circ \tau' \circ \phi'.$ We compute $\tau''$ as before:
\[1\overset{\phi'}\mapsto 123\overset{\tau'}\mapsto 12312\overset{\phi'^{-1}}\mapsto 12\]
\[2\overset{\phi'}\mapsto 12\overset{\tau'}\mapsto 1231\overset{\phi'^{-1}}\mapsto 13\]
\[3\overset{\phi'}\mapsto 1\overset{\tau'}\mapsto 123\overset{\phi'^{-1}}\mapsto 1\]
and hence $\tau''=\tau,$ whence $\delta^2(x)=x.$ Thus for each $n\geq 1$ we have that $\delta^n(x)\in \P_1$ and $\delta^n(x)=x$ for $n$ even  and $\delta^n(x)=\delta(x)$ for $n$ odd.}
\end{example}

\begin{rema}{\em  We note  that by Lemma \ref{lemma:B}, if $x\in {\cal P}_1$,   the first letter  $x^F$ of $x$ is uniformly recurrent in $x$, so that  one can also define (see \cite{Du}) the bijection $\sigma : \{1, \ldots, \card({\cal R}_{x^F})\} \rightarrow {\cal R}_{x^F}$, where ${\cal R}_{x^F}$ is the finite set of the first returns of $x^F$ to $x^F$ in $x$, and define the derived word $D_{x^F}(x)$ with respect to the morphism induced by $\sigma$ \cite{DPZ}. The two derived words $\delta(x)$ and  $D_{x^F}(x)$ can be equal, as in the case of  $x$ equal to Fibonacci word; they can be different as
in the case of Tribonacci word.  In the case of  a word $aS$ where  $a\in \{0, 1\}$ and $S$ is a standard Sturmian word, one has that $\delta(aS)$ is not defined (cf. Lemma \ref{rem:sturm}), whereas $D_a(aS)$ is defined.}
\end{rema}

\section{A hierarchy of words admitting a prefixal factorization}\label{sec:quattro}

We may recursively define a nested collection of words $\cdots \subseteq\P_{n+1}\subseteq \P_n\subseteq \cdots \subseteq \P_1$ by 
\[\P_{n+1}=\{x\in \P_n\,|\, \delta(x)\in \P_n\}\] and
\[\P_\infty =\bigcap_{n=1}^\infty\P_n.\]
Hence, a word $x\in {\cal P}_{\infty}$ if and only if  $x\in {\cal P}_{1}$ and $\delta^n(x) \in {\cal P}_{1}$ for all $n \geq 1$.
The previous example showed that the Tribonacci word belongs to $\P_\infty.$ Similarly, following Example \ref{fibe}, the Fibonacci word also belongs to $\P_\infty.$ In contrast, the following example shows that the Thue-Morse word does not belong to $\P_\infty.$

\begin{example}\label{ex:tm}\rm{The Thue-Morse word $t=0110100110010110\cdots $ is  fixed by the morphism
$\tau$ defined by $0\mapsto 01, 1\mapsto 10. $ It is readily verified that $t\in \P_1.$ In fact,
$UP'(t)=UP(t)=\{0, 01, 011\}$ and $011\prec 01\prec 0.$ Let $\phi: \{1,2,3\}\rightarrow UP'(t)$ be given by $1\mapsto 011, 2\mapsto 01, 3\mapsto 0.$ Then $\delta(t)$ is the fixed point of the morphism $\tau'$ which we compute as in Example \ref{ex:try}:
\[1\overset{\phi}\mapsto 011\overset{\tau}\mapsto 011010\overset{\phi^{-1}}\mapsto 123\]
\[2\overset{\phi}\mapsto 01\overset{\tau}\mapsto 0110\overset{\phi^{-1}}\mapsto 13\]
\[3\overset{\phi}\mapsto 0\overset{\tau}\mapsto 01\overset{\phi^{-1}}\mapsto 2.\]
We thus obtain that $\tau'$ is defined by $1\mapsto 123, 2\mapsto 13, 3\mapsto 2$ which is the well known Hall morphism \cite{H}. Thus $\delta(t)$ is the so-called {\it ternary Thue-Morse} word which is well known to be square-free (cf. \cite{AS}, \cite{LO}). It follows (cf. \cite{DPZ}) that  $\delta(t)$ does not admit a prefixal factorization, i.e., $\delta(t)\notin \P_1.$ Thus $t\notin \P_2$ and hence $t\notin \P_\infty.$ }
\end{example}

 Let $\leq_p$ denote the prefixal order in $\A^*$, i.e., for $u,v \in \A^*$, $u\leq_p v$ if $u$ is a prefix of $v$. We write
$u<_pv$ if $u$ is a proper prefix of $v$. For any word $u\in \A^+$ we let $u^F$ denote the first letter of $u$.

\begin{lemma} Let $\Gamma = \{u_1, u_2, \ldots, u_k\}$ be a finite set of unbordered words over the alphabet $\A =\{1, 2, \ldots, k\}$
such that  $u_k<_p u_{k-1} <_p \cdots <_p u_1$. Let $u_1^F= 1$ and $f$ be the morphism defined by  $i \mapsto u_i$, $i=1, \ldots, k$.
Then the word $x = f^{\omega}(1)$, fixed point of $f$, is such that $\delta(x)= x$.
\end{lemma}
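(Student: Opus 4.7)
My plan is to unwind the definition of the derived word directly, using the fixed-point equation $x = f(x)$ to exhibit the canonical unbordered-prefix factorization of $x$.

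First I would verify that $\Gamma \subseteq UP(x)$: each $u_i$ is unbordered by hypothesis, and since $u_1 = f(1)$ is a prefix of $x = f^\omega(1)$ and $u_i \le_p u_1$ for every $i$, each $u_i$ is an unbordered prefix of $x$. In particular $UP(x)$ is nonempty and finite, so $x \in \mathcal{P}_1$ by Proposition~\ref{PF}. Writing $x = x_0 x_1 x_2 \cdots$ with $x_0 = 1$, the equality $x = f(x)$ gives
\[
x = f(x_0) f(x_1) f(x_2) \cdots = u_{x_0} u_{x_1} u_{x_2} \cdots ,
\]
a prefixal factorization of $x$ whose blocks lie in $\Gamma \subseteq UP(x)$. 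By the uniqueness part of Proposition~\ref{PF}, this must be the canonical factorization $x = U_0 U_1 U_2 \cdots$, so that $U_i = u_{x_i}$ for every $i \ge 0$ and $UP'(x) = \{u_j : j \text{ occurs in } x\}$.

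The next step is to identify the order-preserving bijection $\phi$. Since $U_0 = u_1$ is the very first block and each $U_i = u_{x_i}$, the order $\prec$ of first occurrence on $UP'(x)$ is controlled by the order in which the letters of $x$ are first read off; using the nested-prefix chain $u_k <_p u_{k-1} <_p \cdots <_p u_1$ one verifies by induction that this first-occurrence order coincides with the given indexing, i.e.\ $u_1 \prec u_2 \prec \cdots \prec u_k$. Hence $\phi(i) = u_i$, and so $\phi^{-1}(u_i) = i$ for every $i$.

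Plugging this into the definition of the derived word then gives
\[
\delta(x) = \phi^{-1}(U_0) \phi^{-1}(U_1) \phi^{-1}(U_2) \cdots = \phi^{-1}(u_{x_0}) \phi^{-1}(u_{x_1}) \cdots = x_0 x_1 x_2 \cdots = x,
\]
which is the desired equality. The step I expect to require the most care is the inductive verification that the $\prec$-order on $UP'(x)$ matches the given indexing of $\Gamma$; this is where the nested-prefix hypothesis $u_k <_p \cdots <_p u_1$ is essential. The rest of the argument is a formal unwinding of the definitions of the canonical factorization and of $\delta$, together with the uniqueness assertion of Proposition~\ref{PF}.
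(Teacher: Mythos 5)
You follow the same route as the paper's own proof: the fixed--point equation $x=f(x)$ exhibits a factorization of $x$ into blocks from $\Gamma\subseteq UP(x)$, the uniqueness part of Proposition~\ref{PF} identifies it with the canonical factorization $x=U_0U_1U_2\cdots$, and one then unwinds the definition of $\delta$. The first half of your argument is fine. The problem is precisely the step you single out as delicate: the claim that the first-occurrence order $\prec$ on $UP'(x)$ coincides with the given indexing, i.e.\ $u_1\prec u_2\prec\cdots\prec u_k$. This does not follow from the nested-prefix hypothesis, and it is false in general. Since $U_n=u_{x_n}$, the order $\prec$ on $UP'(x)$ is exactly the order of first occurrence of the \emph{letters} of $x$, and nothing in the hypotheses forces the letters $1,2,\ldots,k$ to appear for the first time in increasing order. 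Concretely, take $k=3$, $u_1=132$, $u_2=13$, $u_3=1$ (all unbordered, $u_3<_pu_2<_pu_1$, $u_1^F=1$). Then $x=f^{\omega}(1)=132113132\cdots$, the canonical factorization is $(132)(1)(13)(132)(132)(1)\cdots$, so $132\prec 1\prec 13$, i.e.\ $u_1\prec u_3\prec u_2$; hence $\phi(2)=u_3$, $\phi(3)=u_2$, and $\delta(x)=123112\cdots\neq 132113\cdots=x$. Your proposed induction therefore cannot close; what the argument actually delivers is only $\delta(x)\simeq x$, with equality holding exactly when the indexing of $\Gamma$ agrees with the order of first occurrence of the letters in $x$ (as happens automatically for $k=2$ and in all of the paper's examples).

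For what it is worth, the paper's own proof has the same lacuna: it asserts $\phi^{-1}(f(x_n))=x_n$ without justification, and that assertion is precisely the unproved order statement above. So your write-up is faithful to the intended argument, but as a proof of the lemma as literally stated it inherits, and makes explicit, a genuine gap: either the conclusion should be weakened to $\delta(x)\simeq x$, or one must add the hypothesis that letter $i$ occurs in $x$ before letter $j$ whenever $i<j$.
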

\begin{proof} Let $x= f^{\omega}(1)= x_0x_1\cdots x_n \cdots$. Since $x$ is fixed by $f$, one has
$$ x= f(x)= f(x_0)f(x_1)\cdots  .$$
Hence, $x$ admits a unique factorization in  unbordered prefixes of the set $\{f(x_i) \mid i\geq 0\} = UP'(x)\subseteq \Gamma$. Let $\phi: \{1,2, \ldots, n_x\} \rightarrow UP'(x)$ be the unique order preserving bijection. One has:
$$ \delta(x) = \phi^{-1}(f(x_0))\phi^{-1}(f(x_1)) \cdots $$
Since $\phi^{-1}(f(x_n))=x_n$, $n\geq 0$, it follows $\delta(x)= x$.
\end{proof}

\begin{proposition}  For each $n\geq 1$ one has that ${\cal P}_{n+1}$ is properly included in ${\cal P}_n$. 
\end{proposition}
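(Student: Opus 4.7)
The plan is to exhibit, for each $n\geq 1$, an explicit word $x_n\in {\cal P}_n\setminus {\cal P}_{n+1}$. The case $n=1$ is already handled by Example~\ref{ex:tm}, where the Thue-Morse word $t$ is shown to belong to ${\cal P}_1\setminus {\cal P}_2$. For $n\geq 2$, I will iterate a carefully chosen morphism applied to $t$, tuned so that $\delta$ acts as a one-step shift on the iterates.

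Concretely, I take $\mu:\{0,1\}^*\to\{0,1\}^*$ defined by $\mu(0)=011$ and $\mu(1)=01$, and set $x_n=\mu^{n-1}(t)$, so that $x_1=t$. The images $\mu(0)$ and $\mu(1)$ are precisely the two longest nonempty unbordered prefixes of $t$ (see Example~\ref{ex:tm}), are themselves unbordered, and are nested as $\mu(1)<_p\mu(0)$.

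The crux is the following inductive claim for $n\geq 1$: one has $UP(x_n)=\{0,01,011\}$, the unique prefixal factorization of $x_n$ in unbordered prefixes is the block factorization $x_n=\mu(x_{n-1,0})\mu(x_{n-1,1})\cdots$ inherited from $x_{n-1}$, and consequently $\delta(x_n)\simeq x_{n-1}$ via the letter renaming $0\mapsto 1$, $1\mapsto 2$. The identity $UP(x_n)=\{0,01,011\}$ is established by a short case analysis on a prefix $p$ of $x_n$ of length $\geq 4$: if $p$ ends in $0$ it has border $0$; if its last two letters are $01$, it has border $01$; the only remaining possibility is that $p$ ends in $11$, which forces $p$ to terminate at the third position of a $\mu(0)=011$ block (since every block junction in $x_n$ has the form $\cdots 1\mid 0\cdots$), in which case $p$ has border $011$. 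Uniqueness of the prefixal factorization (Proposition~\ref{PF}) then pins down the factorization, and $\delta(x_n)\simeq x_{n-1}$ follows after ordering the bijection $\phi:\{1,2\}\to UP'(x_n)$ by first occurrence.

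Iterating the claim yields $\delta^k(x_n)\simeq x_{n-k}$ for $0\leq k\leq n-1$, using that $\delta$ is invariant under letter renamings and that membership in ${\cal P}_1$ is preserved under isomorphism. Hence $\delta^{n-1}(x_n)\simeq t\in{\cal P}_1$, giving $x_n\in{\cal P}_n$; whereas $\delta^n(x_n)\simeq \delta(t)\notin{\cal P}_1$ by Example~\ref{ex:tm}, giving $x_n\notin{\cal P}_{n+1}$. The only nontrivial step is the border analysis in the inductive claim, and it is essentially a direct extension of the computation already carried out for $t$ itself in Example~\ref{ex:tm}.
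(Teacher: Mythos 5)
Your proof is correct and follows essentially the same route as the paper's: the paper likewise produces witnesses of the form $g^{n-1}(t)$, iterating a morphism on the Thue--Morse word $t$ so that $\delta$ undoes one application, except that it uses the Fibonacci morphism $0\mapsto 01$, $1\mapsto 0$ in place of your $\mu: 0\mapsto 011$, $1\mapsto 01$. Your border analysis merely fills in the step the paper dismisses as ``easily verified.''
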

\begin{proof} Let  $t$  be the Thue-Morse word on two symbols  $t= 0110100110010110\cdots$. We have previously seen (see Example \ref{ex:tm}) that $t\in {\cal P}_1\setminus {\cal P}_2$. Let $F$ be the Fibonacci morphism and $F(t)= 01000100101000101001001\cdots$. The word $F(t)$ has a prefixal factorization and $\delta(F(t))\simeq t \in {\cal P}_1\setminus {\cal P}_2$.  It follows that $F(t) \in {\cal P}_2\setminus {\cal P}_3$.  It is easily verified that for any $n>1$ one has that $\delta(F^n(t)) \simeq F^{n-1}(t)$. From this it easily follows that $\delta^{n}(F^n(t))\simeq t$ and $\delta^{n+1}(F^n(t)) = \delta(t)\not\in {\cal P}_1$. Hence, $F^n(t)\in {\cal P}_{n+1}\setminus  {\cal P}_{n+2}$.
\end{proof}

Given an infinite word $x$ with $\card(UP(x))<\infty$, we let $N(x)$ denote the length of the longest unbordered prefix of $x$.
Now, for $x\in {\cal P}_\infty$, we define the map $\nu_x:\Nn \rightarrow \Nn$  by $$ \nu_x(n)=N(\delta^n(x)), \ n\geq 0,$$ where $\delta^0(x)=x$. This is well defined and the sequence $(\nu_x(n))_{n\geq 0}$ is a sequence of natural numbers ($\geq 2$)  which may be bounded or unbounded. If $x$ is the Tribonacci word we have $(\nu_x(n))_{n\geq 0} = 4, 3, 4, 3, 4, 3, \ldots$ 

 Let $a\in \A= \{0,1\}$ and put $b=1-a.$ In the following for  $a\in \{0, 1\}$, we let  $L_a$ be the injective endomorphism of $\{0, 1\}^*$ defined by
 \begin{equation}\label{eq:LA}
L_a: a\mapsto a,  b\mapsto ab.
\end{equation}

\begin{proposition} If  $x\in {\cal P}_\infty$ and $(\nu_x(n))_{n\geq 0}=2,2,2,2,....$,  then $x$ is isomorphic to the Fibonacci word. 
\end{proposition}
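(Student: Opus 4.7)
The plan is to analyze $x$ level by level: at each stage $\delta^n(x)$ the hypothesis $\nu_x(n)=2$ will force a very rigid binary structure, and combining all levels will identify $\delta(x)$ with the Fibonacci word over $\{1,2\}$, from which isomorphism of $x$ with the Fibonacci word follows.

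First I would treat level $0$. Since $N(x)=\nu_x(0)=2$, writing $x=x_0x_1x_2\cdots$ one has $x_1\neq x_0$ and no unbordered prefix of $x$ has length $\geq 3$, so $UP(x)=\{x_0,\,x_0x_1\}$. Every letter of $x$ must appear in some factor from $UP'(x)\subseteq UP(x)$, hence $x$ is binary; after a harmless relabeling take $x_0=0$ and $x_1=1$, so $UP(x)=\{0,01\}$. If $\card(UP'(x))=1$ then $\delta(x)$ would be a constant word, contradicting $\nu_x(1)=2$; hence $UP'(x)=\{0,01\}$, $n_x=2$, and because $U_0$ is the longest unbordered prefix of $x$ one has $01\prec 0$, so the associated bijection $\phi$ satisfies $\phi(1)=01$ and $\phi(2)=0$.

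The next step is to prove by induction on $n\geq 1$ the following claim: $\delta^n(x)\in\{1,2\}^\omega$, begins with $12$, has $UP(\delta^n(x))=UP'(\delta^n(x))=\{1,12\}$ with $12\prec 1$, and its associated morphism is $\tau\colon 1\mapsto 12,\ 2\mapsto 1$. The first letter of $\delta^n(x)$ is forced to equal $1$ because the corresponding first factor at the previous level is the longest unbordered prefix; the hypothesis $N(\delta^n(x))=\nu_x(n)=2$ then forces the second letter of $\delta^n(x)$ to be distinct from the first, hence to equal $2$, and repeating the level-$0$ argument (using $\nu_x(n+1)=2$ to rule out $\card(UP')=1$) completes the step. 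From this one obtains $\delta^n(x)=\tau(\delta^{n+1}(x))$ for every $n\geq 1$, hence $\delta(x)=\tau^{k-1}(\delta^k(x))$ for all $k\geq 1$. Since each $\delta^k(x)$ begins with $1$, the word $\tau^{k-1}(\delta^k(x))$ has $\tau^{k-1}(1)$ as a prefix, and $\tau^{k-1}(1)$ is the prefix of length the $k$-th Fibonacci number of the Fibonacci fixed point $f'$ of $\tau$ over $\{1,2\}$. Letting $k\to\infty$ therefore yields $\delta(x)=f'$. Applying $\phi$ one concludes $x=\phi(f')$; and since under the relabeling $1\mapsto 0,\ 2\mapsto 1$ the morphism $\phi$ becomes the Fibonacci morphism $0\mapsto 01,\ 1\mapsto 0$ which fixes the Fibonacci word $f$ of Example \ref{fibe}, the word $x$ is isomorphic to $f$.

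The main difficulty is the inductive step: one must use $\nu_x(n)=2$ (to force a length-$2$ unbordered prefix whose two letters differ) together with $\nu_x(n+1)=2$ (to rule out the degenerate case $\card(UP'(\delta^n(x)))=1$) in order to transport the rigid binary structure, and in particular the identification of the associated morphism with $\tau$, from one level to the next.
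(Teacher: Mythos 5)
Your proof is correct and follows essentially the same strategy as the paper's: at each level the hypothesis $\nu_x(n)=2$ forces $UP(\delta^n(x))=\{a,ab\}$, so each derived word is the image of the next under a Fibonacci-type morphism, and the forced prefixes converge to the Fibonacci word. You phrase the descent intrinsically via the coding bijections and a single morphism $\tau$ on $\{1,2\}$, where the paper alternates $L_0$ and $L_1$ on $\{0,1\}$ and obtains $(L_0L_1)^k(0)$ as prefixes of $x$ --- the same argument up to relabeling, with your version being slightly more explicit about why $\card(UP'(\delta^n(x)))=2$ and why $x$ is effectively binary.
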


\begin{proof} Without loss of generality we can assume that $x$ begins with $0$. Since $N(x)=2$, it follows that $x$  begins with  $01$ and $01$  is the longest unbordered prefix of $x$. It follows that $x$ is a concatenation of $01$ and $0$ so we can write $ x=L_0(x')$ for some binary word $x' $ beginning with  $1$, where $x' $ is isomorphic to $\delta(x)$. Since $N(\delta(x))=N(x')=2$, it follows that $x' $ begins with $10$ and $10$ is the longest unbordered prefix of $x'$. Hence $x'$ is a concatenation of $10$ and $1$ so that  $x'=L_1(x'')$ where $x''$ begins with $ 0$ and $x''$ is  isomorphic to $\delta(x')=\delta^2(x)$.
Since $N(x'')=2$, it follows that $x''$  begins with $ 01$ and $01$ is the longest unbordered prefix of $x''$. Thus
$x''=L_0(x''')$ for some $x'''$ beginning with  $1$  and isomorphic to $\delta^3(x). $ Continuing in this way
 for each $k$ we have $(L_0L_1)^k(0)$ is a prefix of $x$. Hence,  $x$ is isomorphic to Fibonacci word.
\end{proof}

Let us observe that in general the sequence $(\nu(n))_{n\geq 0}$ does not determine $x$. For instance,
 let  $x$  be the  word fixed by the morphism $0\mapsto 00001$, $1\mapsto 0$
and $y$  be the  word fixed by the morphism $0\mapsto 00101$, $1\mapsto 001$. Then 
$(\nu_x(n))_{n\geq 0}= (\nu_y(n))_{n\geq 0}= 5,5,5,....$

\vspace{2 mm}

The following question naturally arises:

\begin {question} {\em What can be said about the nature of $ x$  if $(\nu_x(n))_{n\geq 0}$ is ultimately periodic?
Is $x$  necessarily a fixed point of a morphism? Conversely, if $x \in {\cal P}_\infty$ is a fixed point of a primitive morphism, is $(\nu_x(n))_{n\geq 0}$  ultimately periodic (in particular bounded) ?}
\end{question}

\section{A coloring problem}\label{sec:5}

  Let ${\bold P}$ be the class of all infinite words $x$ over any  finite alphabet $\A$  such that for every finite coloring $\varphi : \A^+ \rightarrow C$ there exists $c\in C$ and a factorization  $x= V_0V_1V_2\cdots  $ with $\varphi(V_i)=c$ for all $i\geq 0.$ Such a factorization is called $\varphi$-{\em monochromatic}.
  Thus if  $x\notin {\bold P}$, then there exists a finite coloring $\varphi: \A^+\rightarrow C$ such that for every factorization $x=V_0V_1V_2\cdots$ we have $\varphi(V_i)\neq \varphi(V_j)$ for some $i\neq j.$ 
Any such coloring  will be called a {\em separating coloring} for $x$.
  
  We conjectured \cite{DPZ}  that ${\bold P}$ coincides with the set of all periodic words. Partial results in this direction are given in \cite{DPZ} (see also \cite{DZ, ST}).
  
  \begin{lemma} Let $x\in {\cal P}_1$. The following holds:
  $$x \in {\bf P} \Longleftrightarrow \delta(x) \in {\bf P}. $$
  \end{lemma}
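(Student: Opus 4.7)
The plan is to prove the two implications separately. The $(\Leftarrow)$ direction is a straightforward pullback: given any finite coloring $\varphi \colon \A^+ \to C$, define $\psi = \varphi \circ \phi \colon \{1,\dots,n_x\}^+ \to C$ and apply $\delta(x) \in {\bf P}$ to obtain a $\psi$-monochromatic factorization $\delta(x) = v_0 v_1 v_2 \cdots$. Since $x = \phi(\delta(x)) = \phi(v_0)\phi(v_1)\phi(v_2)\cdots$, this yields at once a $\varphi$-monochromatic factorization of $x$, so $x \in {\bf P}$. No real subtlety is expected here.

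For the $(\Rightarrow)$ direction, given $\psi \colon \{1,\dots,n_x\}^+ \to C$, I will design a finite coloring $\varphi \colon \A^+ \to C \cup \{\star_1,\star_2\}$ as follows: for $w \in \A^+$, set $\varphi(w) = \psi(v)$ when $w = \phi(v)$ (the word $v$ being unique by Lemma~\ref{injective}); set $\varphi(w) = \star_1$ when $w$ is a prefix of $x$ not lying in $\phi(\{1,\dots,n_x\}^+)$; and set $\varphi(w) = \star_2$ otherwise. The codomain is finite since $C$ is, so applying $x \in {\bf P}$ produces a $\varphi$-monochromatic factorization $x = V_0 V_1 V_2 \cdots$ with some common color $c$.

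The crux is then to rule out $c \in \{\star_1,\star_2\}$. Since $V_0$ is always a prefix of $x$, one has $\varphi(V_0) \neq \star_2$, eliminating $c = \star_2$. If instead $c = \star_1$, then every $V_i$ is a prefix of $x$, so the factorization is prefixal; but then Lemma~\ref{fact} forces each $V_i$ to lie in $\phi(\{1,\dots,n_x\}^+)$, contradicting $\varphi(V_i) = \star_1$. Hence $c \in C$ and each $V_i = \phi(v_i)$ with $\psi(v_i) = c$; injectivity of $\phi$ combined with $x = \phi(\delta(x))$ then gives the $\psi$-monochromatic factorization $\delta(x) = v_0 v_1 v_2 \cdots$, proving $\delta(x) \in {\bf P}$.

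The main obstacle I anticipate is precisely this exclusion of the junk colors in the forward direction. The key idea is to split the complement of $\phi(\{1,\dots,n_x\}^+)$ into two pieces, namely prefixes of $x$ versus non-prefixes: non-prefixes are ruled out immediately because $V_0$ must be a prefix, and the prefix-case is ruled out because it would force the factorization to be prefixal, at which point Lemma~\ref{fact} closes the argument. Without this two-color split, a single junk color would leave the possibility of a monochromatic factorization that lives entirely outside $\phi(\{1,\dots,n_x\}^+)$ and yields no information about $\delta(x)$.
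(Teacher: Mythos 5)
Your argument is correct and is essentially the paper's proof: the paper uses the same two-junk-color augmentation ($-\infty$ for non-prefixes of $x$, $+\infty$ for prefixes outside the image of $\phi$), eliminates the first color because $V_0$ is a prefix, and the second because a prefixal monochromatic factorization must, by Lemma~\ref{fact}, come from a factorization of $\delta(x)$. The one small divergence is that you colour \emph{every} $w\in\phi(\{1,\dots,n_x\}^+)$ by $\psi(\phi^{-1}(w))$ rather than only the prefixes of $x$, so in your case $c\in C$ the factorization $x=V_0V_1V_2\cdots$ need not be prefixal and neither Lemma~\ref{fact} nor finite-word injectivity (Lemma~\ref{injective}) applies verbatim; the clean justification for $\delta(x)=v_0v_1v_2\cdots$ is that refining each $\phi(v_i)$ into its constituent unbordered prefixes yields a factorization of $x$ over $UP(x)$, which by the uniqueness statement in Proposition~\ref{PF} must coincide with the canonical one.
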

  \begin{proof} We begin by showing that if  $\delta(x) \in {\bf P}$ then  $x \in {\bf P}.$ Since   $x\in {\cal P}_1$,  we have $x= \phi(\delta(x))$, where $\phi$ is the morphism induced by the unique order bijection $\phi:\{1,2, \ldots, n_x\}
  \rightarrow UP'(x).$ Thus if $x\notin {\bf P},$ then by the morphic invariance property [3, Proposition 4.1], one obtains that $\delta(x) \notin {\bf P}.$ 
  
We next prove the converse. 
  Suppose $\delta(x) \notin {\bf P}.$ Then  there exists a separating coloring $\varphi: \A^+ \rightarrow C$ for $\delta(x)$.
Put $C'=C\cup\{+\infty, -\infty\}$ where we assume $+ \infty, -\infty \notin C.$ The coloring $\varphi$ induces a coloring
$\varphi': \A^+ \rightarrow C'$ defined as follows: 

\[ \varphi'(u) = \begin{cases}
\varphi (\phi^{-1}(u) )& \text{if $u$ is a prefix of $x$ and $u=\phi(v)$ for some $v\in \makebox{Fact}(\delta(x)) $}; \\ +\infty &  \text{if $u$ is a prefix of $x$ and $u\notin \phi(\makebox{Fact}(\delta(x)))$};
\\ -\infty &  \text{if $u$ is not a prefix of $x.$}
\end{cases} \]
We note that it follows from Lemma \ref{injective}  that $\varphi'$ is well defined. We now claim that  $\varphi'$ is a separating coloring for $x.$ Suppose to the contrary that $x$ admits a $\varphi'$-monochromatic factorization $x=V_0V_1V_2\cdots $ in non-empty factors.  Since $V_0$ is a prefix of $x$, it follows that $\varphi'(V_0)\neq -\infty,$ and hence $\varphi'(V_i)\neq -\infty$ for each $i\geq 0.$ Thus the factorization 
$x=V_0V_1V_2\cdots $ is a prefixal factorization of $x.$ It now follows from Lemma \ref{fact}  that there 
 exists a factorization $\delta(x)=v_0v_1v_2\cdots$ such that $\phi(v_i)=V_i$ for each $i\geq 0.$
 Since $\varphi'(V_i)=\varphi(v_i)$, it follows that $\delta(x)$ admits a $\varphi$-monochromatic factorization, a contradiction. 
\end{proof}

\begin{thm}\label{cor:piinfty} The following holds:  $ {\bf P}\subset{\cal P}_{\infty}.$
\end{thm}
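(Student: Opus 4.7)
The plan is to prove the inclusion by an induction that reduces, at each step, to the Lemma just established ($x\in\mathbf{P}\iff\delta(x)\in\mathbf{P}$ for $x\in\mathcal{P}_1$), and then to witness strictness via a Sturmian example.

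First I would verify the base case $\mathbf{P}\subseteq\mathcal{P}_1$. Given $x\in\mathbf{P}$, consider the binary coloring $\varphi:\mathbb{A}^+\to\{0,1\}$ with $\varphi(u)=0$ iff $u$ is a prefix of $x$. By hypothesis there is a $\varphi$-monochromatic factorization $x=V_0V_1V_2\cdots$. Since $V_0$ is automatically a prefix of $x$, we have $\varphi(V_0)=0$, and then $\varphi(V_i)=0$ for every $i$, so every $V_i$ is a prefix of $x$. Thus $x$ admits a prefixal factorization, i.e., $x\in\mathcal{P}_1$.

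Now I induct on $n$ to show $\delta^n(x)\in\mathcal{P}_1$ for all $n\geq 0$. The case $n=0$ is the previous paragraph. Assume $\delta^k(x)\in\mathbf{P}\cap\mathcal{P}_1$ for every $k\leq n$. Since $\delta^n(x)\in\mathbf{P}\cap\mathcal{P}_1$, the preceding Lemma gives $\delta^{n+1}(x)=\delta(\delta^n(x))\in\mathbf{P}$, and then the base case applied to $\delta^{n+1}(x)$ yields $\delta^{n+1}(x)\in\mathcal{P}_1$, closing the induction. Hence $x\in\mathcal{P}_n$ for all $n\geq 1$, so $x\in\mathcal{P}_\infty$, proving $\mathbf{P}\subseteq\mathcal{P}_\infty$.

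For the strictness of the inclusion I would point to the Fibonacci word $f$. From Example \ref{fibe} one sees $\delta(f)\simeq f$, so by an immediate induction $\delta^n(f)\in\mathcal{P}_1$ for every $n\geq 1$, whence $f\in\mathcal{P}_\infty$. On the other hand, $f$ is a Sturmian word, and it is proved in \cite{DPZ} that no Sturmian word belongs to $\mathbf{P}$. Therefore $f\in\mathcal{P}_\infty\setminus\mathbf{P}$, and the inclusion is proper.

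The only non-routine step is the base case $\mathbf{P}\subseteq\mathcal{P}_1$; everything else is a clean induction on top of the equivalence already proved in the previous Lemma, plus the Sturmian witness already available from Example \ref{fibe} and \cite{DPZ}. No obstacle is anticipated.
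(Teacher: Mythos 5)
Your proof is correct and follows essentially the same route as the paper: the inclusion is obtained by iterating the preceding lemma ($x\in{\bf P}\Leftrightarrow\delta(x)\in{\bf P}$ for $x\in{\cal P}_1$) together with the fact that a word admitting no prefixal factorization cannot lie in ${\bf P}$ (which you reprove directly via the prefix/non-prefix coloring, while the paper cites it from \cite{DPZ}); the paper merely phrases the induction contrapositively. Your explicit Fibonacci witness for strictness is consistent with the paper, which establishes properness of the inclusion only in the discussion following the theorem.
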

\begin{proof} Suppose that  $x\not\in {\cal P}_{\infty}$. Thus there exists some $n\geq 1$ such that $x\not\in {\cal P}_n$. First suppose $x\not\in{\cal P}_1$. Since $x$ does not admit a prefixal factorization,  one has  $x\notin {\bf  P}$ (see \cite[Proposition~ 3.3]{DPZ}). Next suppose $x\in {\cal P}_1$ but $x\not\in {\cal P}_n$ for some
$n\geq 2$. Then $\delta^n(x)\not\in {\cal P}_1$ and hence as above $\delta^n(x) \notin {\bf P}$. By an iterated application of the preceding lemma it follows that $x\notin {\bf P}$.
\end{proof}

Let us observe that  if $x$ is a periodic word of $\A^{\omega}$, then  for every finite coloring  $x$ has a monochromatic factorization, so
that by  Theorem \ref{cor:piinfty}, or as one immediately verifies, $x\in {\cal P}_{\infty}$. From Theorem \ref{cor:piinfty} one has that  any counter-example to our Conjecture~\ref{concol} belongs to the set ${\cal P}_{\infty}$, which is our main motivation for studying this class of words. However, the converse of Theorem~\ref{cor:piinfty} is not true. For instance, as proved in \cite{DPZ}, Sturmian words do not belong to ${\bf P}$ whereas,
as we shall see in the next section (see Theorem \ref{Sturmsing}), a large class of Sturmian words (nonsingular Sturmian words) belong to $ {\cal P}_{\infty}$.

\begin{lemma}
 If $\varphi: \A^+ \rightarrow C$ is a separating coloring for $x$, then ${\hat \varphi} : \A^+\rightarrow C\cup \{\infty\}$, with $\infty \not\in C$, defined by 
$$ {\hat \varphi}(u)= \begin{cases}
\infty & \text{if $u$ is not a prefix of $x$}; \\
\varphi(u) & \text{if $u$ is a prefix of $x$,}
\end{cases}$$
is a separating coloring for $x$.
\end{lemma}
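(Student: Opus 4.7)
The plan is to argue by contradiction, splitting on the value of the common color. Assume, for contradiction, that $x$ admits a $\hat\varphi$-monochromatic factorization $x = V_0 V_1 V_2 \cdots$, where each $V_i$ is nonempty and $\hat\varphi(V_i) = c$ for all $i \geq 0$, with $c \in C \cup \{\infty\}$.

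First I would dispose of the case $c = \infty$. The initial factor $V_0$ is necessarily a prefix of $x$, since $x$ begins with $V_0$. By the definition of $\hat\varphi$ this forces $\hat\varphi(V_0) = \varphi(V_0) \in C$, so $c \in C$, ruling out $c = \infty$.

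Thus we are reduced to the case $c \in C$. Here the definition of $\hat\varphi$ forces each $V_i$ to be a prefix of $x$ (otherwise $\hat\varphi(V_i) = \infty \neq c$), and on prefixes of $x$ the colorings $\hat\varphi$ and $\varphi$ agree. Consequently $\varphi(V_i) = \hat\varphi(V_i) = c$ for every $i\geq 0$, and so $x = V_0 V_1 V_2 \cdots$ is a $\varphi$-monochromatic factorization of $x$. This contradicts the hypothesis that $\varphi$ is a separating coloring for $x$, completing the proof.

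There is no real obstacle here; the lemma amounts to the bookkeeping observation that collapsing all non-prefixes of $x$ to a single fresh color $\infty$ cannot create a new monochromatic factorization, because the leading block of any factorization of $x$ is automatically a prefix of $x$ and therefore picks its color from $C$.
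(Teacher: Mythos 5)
Your argument is correct and is essentially the same as the paper's: both observe that $V_0$ must be a prefix of $x$, forcing the common color into $C$ and hence forcing every $V_i$ to be a prefix, so the factorization is already $\varphi$-monochromatic, contradicting that $\varphi$ is separating. You merely spell out the case split on $c=\infty$ that the paper leaves implicit.
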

\begin{proof} Suppose that there exists a ${\hat \varphi}$-monochromatic factorization $$x = V_0V_1V_2\cdots $$ in non-empty factors $V_i$, $i\geq 0$. Since $V_0$ is a prefix of $x$, the preceding factorization has to be a  $\varphi$-monochromatic prefixal factorization, a contradiction as $\varphi$ is separating for $x$.
\end{proof}

\begin{proposition} Let $x\in \A^{\infty}$ and  $\Omega(x)$  the shift-orbit closure of $x$. If $x\not\in {\cal P}_{\infty}$, there exists a separating coloring $\varphi$ for $x$ such that if $y\in \Omega(x)$ and $y\neq x$,
then $y$ has a $\varphi$-monochromatic factorization.
\end{proposition}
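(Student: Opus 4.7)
First observe that $x$ cannot be a constant word: each $a^{\omega}$ belongs to $\mathcal{P}_{\infty}$, since $\delta^k(a^{\omega})$ remains constant and hence lies in $\mathcal{P}_1$ for every $k\geq 0$. Consequently $\calS(x)\neq x$. Let $n\geq 0$ be the least integer with $w:=\delta^n(x)\notin \mathcal{P}_1$, and take the $2$-coloring $\psi$ of the alphabet of $w$ given by $\psi(u)=0$ if $u$ is a prefix of $w$ and $\psi(u)=1$ otherwise. Then $\psi$ is separating for $w$: a color $0$ monochromatic factorization would be prefixal, which does not exist since $w\notin \mathcal{P}_1$, while the first block of any factorization of $w$ is automatically a prefix of $w$, ruling out color $1$.

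If $n\geq 1$, pull $\psi$ back iteratively to $x$ by applying the preceding lemma $n$ times (passing from $\delta^j(x)$ to $\delta^{j-1}(x)$ for $j=n,\ldots,1$); this yields a coloring $\varphi$ of $\A^+$ that is separating for $x$, since each individual pullback preserves separation. The key structural feature I exploit is that after the final pullback every word which is not a prefix of $x$ receives one and the same colour (the outermost ``$-\infty$''); hence any factorization of an infinite word into blocks that are all non-prefixes of $x$ is automatically $\varphi$-monochromatic. When $n=0$ the same holds directly with $\varphi=\psi$, the common colour being $1$.

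It therefore suffices to exhibit, for every $y\in \Omega(x)$ with $y\neq x$, a factorization $y=W_0W_1W_2\cdots$ in which each $W_i$ is a non-prefix of $x$. I construct one recursively: set $z_0=y$ and, given $z_i\in \Omega(x)\setminus \{x\}$, let $m_i$ be the least coordinate where $z_i$ and $x$ differ and let $k_i>m_i$ be minimal with $\calS^{k_i}(z_i)\neq x$. Such $k_i$ exists, for otherwise $\calS^{m_i+1}(z_i)=\calS^{m_i+2}(z_i)=x$, which would force $\calS(x)=x$ and contradict non-constancy of $x$. Take $W_i$ to be the prefix of $z_i$ of length $k_i$ (a non-prefix of $x$, since it disagrees at position $m_i$), and $z_{i+1}=\calS^{k_i}(z_i)\in \Omega(x)\setminus \{x\}$. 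The main technical obstacle is precisely the existence of these $k_i$: if at some stage the tail $\calS^k(z_i)$ equalled $x$, every prefix from that point on would be a prefix of $x$ and the non-prefix construction would collapse; non-constancy of $x$, itself forced by $x\notin \mathcal{P}_{\infty}$, together with the shift-invariance of $\Omega(x)$, is exactly what removes this obstacle.
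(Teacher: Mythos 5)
Your proof is correct and follows essentially the same strategy as the paper's: exhibit a separating coloring for $x$ under which every non-prefix of $x$ receives one common colour, then factor each $y\in \Omega(x)\setminus\{x\}$ into blocks that are not prefixes of $x$. The only (minor) differences are that the paper obtains the coloring by applying its $\hat\varphi$-lemma to an arbitrary separating coloring whose existence comes from ${\bf P}\subset \mathcal{P}_{\infty}$, whereas you rebuild it by pulling back the prefix coloring from the first level $n$ with $\delta^n(x)\notin \mathcal{P}_1$, and you supply the details of the non-prefix factorization of $y$ (via non-constancy of $x$) that the paper asserts in one line.
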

\begin{proof}  By Theorem \ref{cor:piinfty} one has  $x\notin {\bf P}$. If $y\in \Omega(x)$ and $y\neq x$, then, as $x$ is not periodic, $y$ can  always  be factorized as
$y= V_0V_1V_2\cdots $ where each $V_i$, $i\geq 0$, is not a prefix of $x$. Let ${\hat \varphi}$ be the separating coloring for $x$
 defined in the  preceding lemma. Then one has that ${\hat \varphi}(V_j)= \infty$ for all $j\geq 0$.
\end{proof}

\begin{proposition}Let $x\in \A^\omega$ and let $\mathcal{A}$ be any  finite collection of words in ${\cal P}_\infty ^c\cap \Omega(x)$, where  ${\cal P}_\infty ^c$ denotes the complement of ${\cal P}_\infty.$ Then there exists a finite coloring $\varphi:\A^+\rightarrow C$ such that for each $y\in \Omega(x)$,   $\varphi$ is a separating coloring for $y$  if and only if $y \in \mathcal{A}.$
\end{proposition}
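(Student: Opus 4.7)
My approach is to take $\varphi$ to be the product of individually chosen separating colorings, one per element of $\mathcal{A}$, each designed so that words irrelevant to the $i$-th target receive a common ``filler'' color. Write $\mathcal{A}=\{y_1,\ldots,y_k\}$. Each $y_i\in {\cal P}_{\infty}^c$, and since every periodic word lies in ${\bf P}\subset {\cal P}_{\infty}$, each $y_i$ is aperiodic and in particular $y_i\notin{\bf P}$ by Theorem~\ref{cor:piinfty}. Hence a separating coloring for $y_i$ exists, and by the preceding lemma I may take it in the form $\varphi_i:\A^+\rightarrow C_i$ with a distinguished symbol $\infty_i\in C_i$ such that $\varphi_i(u)=\infty_i$ whenever $u$ is not a prefix of $y_i$. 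Define $\varphi:\A^+\rightarrow C_1\times\cdots\times C_k$ componentwise, $\varphi(u)=(\varphi_1(u),\ldots,\varphi_k(u))$.

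That $\varphi$ is separating for each $y_i$ is immediate: the projection to the $i$-th coordinate of any $\varphi$-monochromatic factorization of $y_i$ would be a $\varphi_i$-monochromatic factorization, contradicting the choice of $\varphi_i$. The substantive direction is to show that every $y\in\Omega(x)\setminus\mathcal{A}$ admits a $\varphi$-monochromatic factorization. For this it suffices to construct a factorization $y=V_0V_1V_2\cdots$ in which no $V_j$ is a prefix of any $y_i$, since then every $V_j$ carries the constant color $(\infty_1,\ldots,\infty_k)$.

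The crucial observation enabling the construction is that the set $T=\{n\geq 0 \mid \mathcal{S}^n(y)\in \mathcal{A}\}$ is finite; in fact $|T|\leq k$. Indeed, aperiodicity of each $y_i$ forces $\mathcal{S}^m(y)=y_i$ to hold for at most one value of $m$, since two such exponents $m_1<m_2$ would yield $\mathcal{S}^{m_2-m_1}(y_i)=y_i$ and hence $y_i$ periodic. With this in hand I build the factorization greedily: put $n_0=0$, which is legal because $y\notin \mathcal{A}$, and inductively, given $n_j\notin T$, use $\mathcal{S}^{n_j}(y)\neq y_i$ for every $i$ to extract a length $M_j$ beyond which no prefix of $\mathcal{S}^{n_j}(y)$ is a prefix of any $y_i$ (take $M_j$ larger than the first position at which $\mathcal{S}^{n_j}(y)$ and $y_i$ disagree, for each $i$). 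Finiteness of $T$ then lets me choose $n_{j+1}\geq n_j+M_j$ with $n_{j+1}\notin T$, and setting $V_j=y_{n_j}y_{n_j+1}\cdots y_{n_{j+1}-1}$ gives the desired block while keeping the induction alive; since $n_j\to\infty$ this produces the required factorization.

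The main obstacle I anticipate is precisely this greedy step: each $V_j$ must simultaneously be long enough to escape being a prefix of any $y_i$ (controlled by $M_j$) and terminate at a position $n_{j+1}$ outside $T$ (controlled by finiteness of $T$). Both conditions ultimately rest on the aperiodicity of the words in $\mathcal{A}$, which is itself a consequence of the inclusion ${\bf P}\subset {\cal P}_{\infty}$ of Theorem~\ref{cor:piinfty} together with the elementary observation that every periodic word belongs to ${\bf P}$.
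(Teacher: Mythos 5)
Your proof is correct, and it differs from the paper's in one genuine respect: how the individual separating colorings are combined. The paper builds a single coloring into the disjoint union of the $C_i$ by first fixing a length threshold $k$ beyond which a word can be a prefix of at most one $y_i$; it then colors short prefixes of words of $\mathcal{A}$ by themselves, long prefixes of $y_i$ by $\varphi_i$, and everything else by $\infty$. This forces an extra case in the verification (a monochromatic factorization of $y_i$ with $|V_0|<k$ would give $y_i=V_0^{\omega}$, excluded because $y_i\notin {\bf P}$). Your product coloring $\varphi=(\varphi_1,\ldots,\varphi_k)$, with each factor normalized via the $\hat\varphi$ lemma, avoids the threshold $k$ and the short-prefix case entirely: separation for each $y_i$ is immediate by projection, at the cost of a color set that is a product rather than a union. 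The second half of your argument — factoring $y\notin\mathcal{A}$ so that no block is a prefix of any $y_i$ — is exactly the step the paper asserts with ``one easily derives,'' and your greedy construction via the finite set $T=\{n\mid {\cal S}^n(y)\in\mathcal{A}\}$ (finite because no $y_i$ is purely periodic, as purely periodic words lie in ${\bf P}\subset{\cal P}_\infty$) is a careful and correct filling-in of that step. The only nominal omission is the degenerate case $\mathcal{A}=\emptyset$, which the paper treats separately with a constant coloring and which your product construction handles vacuously anyway.
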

\begin{proof} The result is trivial if $\mathcal{A} =\emptyset$. Indeed in this case it is sufficient to consider the coloring $\varphi: \A^+\rightarrow C$ defined as follows: for any $u\in \A^+$, $\varphi(u)= c\in C$. In this way any $y\in \Omega(x)$ will have a $\varphi$-monochromatic factorization. Let us then suppose that $\mathcal{A}$ is not empty.

 Let $\mathcal{A} =\{y_1, \ldots, y_r \}$. Since for $1\leq i < j\leq r$, $y_i\neq y_j$, there exists a positive integer $k$ such
that any word of $\A^*$ of length $\geq k$ can be prefix of at most one of the words of  $\mathcal{A}$.  As  $\mathcal{A} \in {\cal P}_\infty ^c$ by Theorem \ref{cor:piinfty},   no word $y_i\in \mathcal{A}$ belongs to ${\bf P}$. Hence, for each  $1\leq i \leq r$ there exists a coloring $\varphi_i: \A^+ \rightarrow C_i$ which is separating for $y_i$. Let us observe that as $\mathcal{A}\subseteq \Omega(x)$ one has
$$\bigcup_{i=1}^r \Ff(y_i) \subseteq \Ff(x).$$
We can define a finite coloring $\varphi$ on $\A^+$ as follows. For $u\in \A^+$,

$$\varphi(u)= \begin{cases}
u & \text{ if $|u|<k$ and $u$ is a prefix of at least one word of $\mathcal{A}$; }\\
\varphi_1(u) & \text{ if  $u$ is a prefix of $y_1$ of length $\geq k$;}\\
\vdots             &    \vdots\\
\varphi_r(u) & \text{ if  $u$ is a prefix of $y_r$ of length $\geq k$;}\\
\infty & \text{if $u$ is not a prefix of any of the words of $\mathcal{A}$.}
\end{cases}$$

Let us first prove that for each $y\in \mathcal{A}$, the coloring $\varphi$ is separating. Let $y=y_i\in \mathcal{A}$ and suppose that
there exists a $\varphi$-monochromatic factorization  $y_i = V_0V_1V_2\cdots $ where each $V_i$ is non-empty.  Since $V_0$ is a prefix of $y_i$, the preceding factorization has to be a prefixal factorization. If $\varphi(V_i) = \varphi(V_0)$ and $|V_0|<k$ it would follow that $y_i= V_0^{\omega}$ a contradiction because $y_i\notin{\bf P}$. Thus as $V_0$ is a prefix of $y_i$ of length $\geq k$, it follows that $\varphi_i(V_j)= \varphi_i(V_0)$
for all $j\geq 0$ and this contradicts the fact that $\varphi_i$ is separating for $y_i$.

Let us now prove that if $y\not\in \mathcal{A}$, then $y$ admits a  $\varphi$-monochromatic factorization.  Since for each  $1\leq i\leq r$, $y\neq y_i$ and  $y_i$ is not periodic, one easily derives that $y$ can be factorized as $y = V_0V_1\cdots $, where each $V_j$,
$j\geq 0$, is not prefix of any of the words $y_i\in \mathcal{A}$, $i=1,\ldots,r$.  From this one has $\varphi(V_j)= \infty$ for all $j\geq 0$. 
\end{proof}

\begin{question} Let $x\in \A^\omega$ be uniformly recurrent.  Given a finite coloring $\varphi:\A^+\rightarrow C$ does there exist a finite (possibly empty) set $\mathcal{A}\subset \Omega(x)$ such that  for each $y\in \Omega(x)$ we have that $y$ admits a $\varphi$-monochromatic factorization if and only if $y\notin \mathcal{A}?$
\end{question}

Let us observe that in the previous question the hypothesis that $x$ is uniformly recurrent is necessary.  Indeed, let $x$ be word
$x= 010^21^20^31^3 \cdots$ and  $\varphi: \{0, 1\}^+\rightarrow \{0, 1, *\}$ the finite coloring defined for all  $u\in \Ff^+(x)$ by  $\varphi(u)= 0$, if $u$ begins with $0$,  $\varphi(u)=1$ if $u$ begins with $1$, and  if   $u\not\in \Ff^+(x)$, by $\varphi(u)= *$ .  In the shift  orbit closure $\Omega(x)$ of $x$ there are infinitely many words
$0^n1^{\omega}$, $n\geq 1$ which do not admit a $\varphi$-monochromatic factorization.

\begin{proposition}\label{prop:ur} Let $x\in \P_\infty.$ Then $x$ is uniformly recurrent. 

\end{proposition}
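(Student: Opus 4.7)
The plan is to establish the stronger statement that every prefix of $x$ is uniformly recurrent in $x$; the proposition then follows because every factor of $x$ is a suffix of some prefix of $x$, and suffixes of uniformly recurrent words are uniformly recurrent. To produce a growing family of uniformly recurrent prefixes, I would exploit the entire derivation hierarchy. Let $\phi_k$ denote the order-preserving morphism associated to the derivation $\delta^{k-1}(x)\mapsto \delta^k(x)$, let $\Phi_n=\phi_1\circ \phi_2\circ\cdots \circ\phi_n$, so that $x=\Phi_n(\delta^n(x))$, let $a_n$ be the first letter of $\delta^n(x)$, and put $P_n:=\Phi_n(a_n)$. Since $\phi_n(a_n)$ is by construction the longest unbordered prefix of $\delta^{n-1}(x)$, the word $P_n$ is a prefix of $x$, and $P_{n-1}$ is a prefix of $P_n$.

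The main step is to show that each $P_n$ is uniformly recurrent in $x$. Since $\delta^n(x)\in \P_1$, Lemma~\ref{lemma:B} provides an integer $k$ such that every factor of $\delta^n(x)$ of length $k$ contains $a_n$. Setting $L=\max_c |\Phi_n(c)|$, a standard block argument shows that any factor $w$ of $x$ of length at least $L(k+2)$ contains, after trimming the two possibly partial blocks at its ends in the $\Phi_n$-decomposition of $x$, a sub-factor of the form $\Phi_n(z)$ with $z$ a factor of $\delta^n(x)$ of length at least $k$. Since $z$ contains $a_n$ and $\Phi_n$ is non-erasing, $\Phi_n(z)$ contains $\Phi_n(a_n)=P_n$, proving that $P_n$ is uniformly recurrent in $x$.

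To finish, I would split into two cases depending on whether the sequence $(|P_n|)_{n\geq 1}$ is bounded. If it is unbounded, then every prefix $p$ of $x$ is a prefix of some $P_n$, and since every occurrence of $P_n$ in $x$ begins with an occurrence of $p$, the prefix $p$ is uniformly recurrent. If on the other hand $(|P_n|)$ is bounded, then as $P_{n-1}\leq_p P_n$ it is eventually constant from some index $N$, forcing $\phi_{n+1}(a_{n+1})=a_n$ for all $n\geq N$ by injectivity of the $\Phi_n$ (Lemma~\ref{injective}); equivalently, the longest unbordered prefix of $\delta^n(x)$ reduces to its first letter. An elementary induction (a length-$2$ prefix $a\,b$ is unbordered unless $b=a$, and one proceeds letter by letter) then gives $\delta^N(x)=a_N^\omega$, hence $x=P_N^\omega$ is purely periodic and therefore trivially uniformly recurrent.

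The main technical obstacle is the block-decomposition estimate in the middle step: one must carefully account for the two partial blocks at the ends of a factor $w$ of $x$ under the $\Phi_n$-factorization, which is why the threshold $L(k+2)$ rather than the naive $Lk$ is required. Once uniform recurrence of each $P_n$ has been secured, the closing dichotomy between growing prefixes and periodicity is conceptually routine.
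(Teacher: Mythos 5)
Your proof is correct, but it runs in the opposite direction from the one in the paper. You build uniform recurrence ``from below'': you exhibit the nested family of prefixes $P_n=\Phi_n(a_n)$, transfer the uniform recurrence of the first letter $a_n$ of $\delta^n(x)$ (Lemma~\ref{lemma:B}) down to $x$ by an explicit block-counting estimate through the composed morphism $\Phi_n$, and then close with a dichotomy: either $|P_n|\to\infty$ and every prefix of $x$ is uniformly recurrent, or the $P_n$ stabilize, the longest unbordered prefix of some $\delta^N(x)$ is a single letter, and $x=P_N^\omega$ is periodic. The paper instead argues by descent: it proves by induction on $n$ that if the length-$n$ prefix of $x$ fails to be uniformly recurrent then $x\notin\P_\infty$, using the key observation that $\phi$ strictly increases the length of prefixes, so the shortest non-uniformly-recurrent prefix of $\delta(x)$ is strictly shorter than that of $x$; the base case is again Lemma~\ref{lemma:B}. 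The two arguments rest on the same transfer principle (a uniformly recurrent prefix of $\delta(x)$ pushes forward to a uniformly recurrent prefix of $x$), which the paper asserts in one line and you prove explicitly via the $L(k+2)$ window estimate. What your version buys is a quantitative recurrence bound for each $P_n$ and an explicit treatment of the degenerate periodic case; what the paper's version buys is brevity, since the induction on prefix length absorbs both branches of your dichotomy at once.
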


\begin{proof}We show by induction on $n\geq 1$ that for any infinite word $x$ over a finite alphabet if the prefix of $x$ of length $n$ is not uniformly recurrent, then $x\notin \P_\infty.$ If the first letter of $x$ is not uniformly recurrent in $x,$ then by Lemma \ref{lemma:B}  the word $x$ does not admit a prefixal factorization, hence $x\notin \P_1.$ 

Let $n\geq 1,$ and suppose the result holds up to $n.$ Let $x\in \A^\omega$ and suppose that the prefix $u$ of $x$ of length $n+1$ is not uniformly recurrent in $x. $ We will show that $x\notin \P_\infty.$  If $x\notin \P_1,$ we are done. So we may assume that $x\in \P_1.$ Let $\phi: \{1,2,\ldots ,n_x\}\rightarrow UP'(x)$ denote the unique order preserving bijection. Consider $\delta(x)\in  \{1,2,\ldots ,n_x\}^\omega.$ Since
$x=\phi(\delta(x))$ it follows that $\delta(x)$ is not  uniformly recurrent. If $v$ is a prefix of $\delta(x)$ which is uniformly recurrent in $\delta(x),$ then $\phi(v)$ is a uniformly recurrent prefix of $x.$ Moreover,  for every prefix $v$ of $\delta(x)$ we have $|v|<|\phi(v)|.$ Thus the shortest non-uniformly recurrent prefix of $\delta(x)$ is of length smaller than  or equal to $n.$ By induction hypothesis, $\delta(x)\notin \P_\infty,$ and hence $x\notin \P_\infty.$ 
\end{proof} 

\noindent As a consequence of  Proposition \ref{prop:ur} and Theorem \ref{cor:piinfty} we recover the following result first proved in \cite{DPZ}:

\begin{corollary} If  $x\in {\bf P}$ then $x$ is uniformly recurrent. 
\end{corollary}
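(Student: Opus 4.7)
The plan is to obtain the conclusion by simply chaining the two preceding results. Given any $x\in \mathbf{P}$, Theorem \ref{cor:piinfty} provides the containment $\mathbf{P}\subset \mathcal{P}_\infty$, so in particular $x\in \mathcal{P}_\infty$. Proposition \ref{prop:ur} then applies directly to $x$ and yields that $x$ is uniformly recurrent, which is the desired conclusion.

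Since the two ingredients have already been established, no new combinatorial content is required and there is no real obstacle at the level of the corollary itself. The substantive work was carried out earlier: the inclusion $\mathbf{P}\subset \mathcal{P}_\infty$ rests on the equivalence $x\in \mathbf{P} \Longleftrightarrow \delta(x)\in \mathbf{P}$ for $x\in \mathcal{P}_1$, together with the fact that a word lacking a prefixal factorization cannot belong to $\mathbf{P}$; and the uniform recurrence of elements of $\mathcal{P}_\infty$ is proved by induction on the length of the shortest prefix of $x$ that fails to be uniformly recurrent, exploiting the strict inequality $|v|<|\phi(v)|$ for prefixes $v$ of $\delta(x)$ to pass from $x$ to $\delta(x)$ while strictly decreasing the length of the offending prefix. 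Thus the proof of the corollary amounts to invoking these two results in sequence.
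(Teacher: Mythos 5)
Your proposal is correct and is exactly the paper's argument: the corollary is stated there as an immediate consequence of Theorem \ref{cor:piinfty} (${\bf P}\subset {\cal P}_{\infty}$) and Proposition \ref{prop:ur}, with no further work needed. Your additional summary of how those two ingredients were proved is accurate but not required for the corollary itself.
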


\section{The case of Sturmian words}\label{sec:sei}

A word $x\in \{0,1\}^{\omega}$ is called {\it Sturmian} if  it is aperiodic and {\em balanced}, i.e., for all factors $u$ and $v$ of $x$ such that $|u|=|v|$ one has
$$ | |u|_a-|v|_a| \leq 1, \ a\in \{0,1\}.$$

\begin{definition} Let $a\in \{0,1\}$. We say that a Sturmian word is of type $a$  if it contains the factor $aa$.
\end{definition}
\noindent Clearly a Sturmian word is either of type $0$ or of type $1$, but not both.

 Alternatively, a binary infinite word $x$ is Sturmian if  $x$ has a unique left (or equivalently right) special factor  of length $n$  for each  integer $n\geq 0$. In terms of factor complexity, this is equivalent to saying  that  $\lambda_x(n)= n+1$ for $n\geq 0$. As a consequence one derives that  a Sturmian word $x$ is {\em closed under reversal}, i.e., if $u$ is a factor of $x$, then so is its reversal $u^{\sim}$ (see, for instance, \cite[Chap. 2]{LO2}).

 A Sturmian word $x$ is called {\em standard} (or {\em characteristic})  if  all its prefixes are left special factors of $x$. Since, as is well known,
 Sturmian words are uniformly recurrent, it follows that for any Sturmian word $x$ there exists a standard Sturmian word $S$ such that
 $\Ff (x) = \Ff (S)$.

 Following \cite{BPZ}  we say that a Sturmian word $x\in \{0,1\}^\omega$ is {\it singular} if it contains a standard Sturmian word as a proper suffix,
i.e., there exist  $u\in \{0, 1\}^+$ and a standard Sturmian word $S$ such that  $x=uS$. It is readily verified that  the previous factorization  is unique. A Sturmian word which is not singular  is said to be {\it nonsingular}. 
 
 Let $a\in \{0,1\}$ and  $b=1-a.$ In the following for  $a\in \{0, 1\}$, we consider the injective endomorphism  $L_a$ of $\{0, 1\}^*$  defined in (\ref{eq:LA}) and 
 the injective endomorphism $R_a$ of $\{0, 1\}^*$ defined by
$$  R_a : a\mapsto a,  b\mapsto ba .$$
We recall \cite[Chap. 2]{LO2} that the monoid generated by $L_a$ and $R_a$, with $a\in \{0,1\}$ contains all endomorphisms $f$ of $\{0, 1\}^*$ which preserve Sturmian words, i.e., the image $f(y)$ of any  Sturmian word $y$ is a Sturmian word.

Let us observe  that for any infinite   word $x$ over $\{0, 1\}$ we have $L_a(x)=aR_a(x).$ If  $x$ is a Sturmian word and $v$ is a left special factor of $x,$ then $L_a(v)$ is a left special factor of $L_a(x).$ Thus if $S$ is a standard Sturmian word, then so is $L_a(S)=aR_a(S).$  Conversely, if $S$ is a standard Sturmian word of type $a,$ then $S=L_a(S')$ for some standard Sturmian word $S'.$ 

\begin{lemma}\label{rem:sturm} A Sturmian word $x$  belongs to ${\cal P}_1$ if and only if $x\neq aS$ where $a\in \{0, 1\}$ and $S$ a standard  Sturmian word.
\end{lemma}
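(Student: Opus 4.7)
I prove the two directions separately.

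For the direction $(\Leftarrow)$, suppose $x$ is Sturmian with $x\neq aS$ for every $a\in\{0,1\}$ and every standard Sturmian word $S$. Let $S^\ast$ denote the unique standard Sturmian word with $\Ff(x)=\Ff(S^\ast)$. The hypothesis is equivalent to the shift $x[1{:}]$ being non-standard, so the longest common prefix between $x[1{:}]$ and $S^\ast$ has some finite length $k_0$. Moreover, since a prefix of $S^\ast$ is right-special in $\Ff(S^\ast)$ if and only if it is a palindromic (central) prefix, $k_0$ must equal the length of some central word $B$ of $S^\ast$. Writing $b := x[k_0+1]\neq S^\ast[k_0]$, the initial segment $x[0{:}k_0+2]=x_0\cdot B\cdot b$ is then uniquely determined, with $x_0$ forced by the requirement $x_0 B b\in\Ff(S^\ast)$. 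I would then argue via the Sturmian shift dynamics that $x_0\cdot B$ reoccurs as a suffix of every sufficiently long prefix $u$ of $x$, producing a border; this would yield $|UP(x)|\leq k_0+2$, so $x\in {\cal P}_1$ by Proposition~\ref{PF}.

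For the direction $(\Rightarrow)$, I argue the contrapositive: assuming $x=aS$ with $S$ a standard Sturmian word of slope $\alpha$, I exhibit infinitely many unbordered prefixes of $x$. The plan is to invoke the classical Sturmian--Christoffel correspondence \cite[Chap.~2]{LO2}: for each convergent $p_n/q_n$ of $\alpha$ there is a Christoffel word $C_n$ of slope $p_n/q_n$ and length $p_n+q_n$; each $C_n$ is a Lyndon word, hence unbordered. Furthermore, the lower and upper Christoffel words associated to successive convergents of $\alpha$ alternate in being prefixes of $0S$ and $1S$ respectively. Consequently, for either choice of $a\in\{0,1\}$ the word $aS$ admits infinitely many Christoffel-word prefixes of unbounded length, each unbordered, whence $|UP(aS)|=\infty$ and $aS\notin {\cal P}_1$.

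The main obstacle lies in the $(\Leftarrow)$ direction: the hard step is verifying that the initial segment $x_0\cdot B$ reoccurs as a suffix of every sufficiently long prefix of $x$. One route is via the three-distance theorem applied to the underlying circular rotation: once the intercept of $x$ lies away from the singular orbit of the standard intercept, the gaps between successive occurrences of $x_0\cdot B$ in $x$ are uniformly bounded, and a quantitative version of this control forces such an occurrence to fall at the end of any long prefix. The $(\Rightarrow)$ direction, by contrast, is essentially a direct appeal to the classical Christoffel-word theory of standard Sturmian words, and is relatively routine once the Lyndon/unbordered property of Christoffel words is invoked.
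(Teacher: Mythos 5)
Your $(\Rightarrow)$ direction is essentially sound and is in fact the paper's own argument in different clothing: the Christoffel words $aUb$ (with $U$ central, i.e., bispecial) are precisely the unbordered Sturmian factors of length greater than $1$, and since the directive sequence of a standard word $S$ contains both letters infinitely often, infinitely many palindromic prefixes $U$ of $S$ are followed in $S$ by $0$ and infinitely many by $1$; hence both $0S$ and $1S$ begin with infinitely many unbordered words. (Your word ``alternate'' is not literally correct --- the pattern of which letter follows a given palindromic prefix is governed by the directive sequence, not by strict alternation --- but the conclusion you need, namely infinitely many Christoffel prefixes for each choice of $a$, is true.)

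The $(\Leftarrow)$ direction, however, contains a genuine gap, and it is exactly the step you flag as the ``main obstacle.'' The claim that the fixed word $x_0B$ occurs as a suffix of \emph{every} sufficiently long prefix $u$ of $x$ is false. Take $x$ to be the Fibonacci word $f=01001010010\cdots$: here $S^\ast=f$, the first shift of $f$ begins with $1\neq 0$, so $k_0=0$ and $x_0B=0$; yet the prefix $01001$ ends in $1$. Uniform recurrence (bounded gaps between occurrences of $x_0B$) only guarantees that $x_0B$ occurs somewhere \emph{near} the end of each long prefix, not that an occurrence terminates exactly at position $|u|$; the set of positions at which a fixed factor ends in a Sturmian word has density strictly less than $1$, so infinitely many prefixes escape your argument. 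What you actually need is that every sufficiently long prefix has \emph{some} border, and the border varies with the prefix (the prefix $01001$ above is bordered by $01$, not by $0$). The efficient route --- the one the paper takes --- is the characterization (Berstel--de Luca, Harju--Nowotka) that every unbordered factor of length $>1$ of a Sturmian word has the form $bUc$ with $\{b,c\}=\{0,1\}$ and $U$ bispecial. Writing $x=x_0x'$, an unbordered prefix of $x$ of length $>1$ is then $x_0Uc$ with $U$ a bispecial (hence central) prefix of $x'$; every such $U$ is a prefix of $S^\ast$, hence a prefix of $B$, so $\card(UP(x))\leq |B|+2<\infty$ and $x\in{\cal P}_1$ by Proposition~\ref{PF}. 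Your observation that the longest common prefix $B$ of $x'$ and $S^\ast$ is central is correct and is the right first step, but the finiteness of $UP(x)$ should be extracted from this characterization rather than from a recurrence or three-distance argument. (A side remark: $x_0$ is not in general forced by the condition $x_0Bb\in\Ff(S^\ast)$, since $B$ being bispecial means three of the four words $cBd$ are factors.)
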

\begin{proof} Indeed, as is well known (see \cite{BDL, HN})  the unbordered factors of length greater than $1$ of a Sturmian word $x$ are of the form $bUc$ with $\{b, c\}= \{0, 1\}$  and $U$ a bispecial factor of $x$. Therefore, if $x = ax'$ has infinitely many unbordered prefixes, then $x'$ begins with infinitely many bispecial factors of $x,$ and hence $x'$ is a standard  Sturmian word. Conversely, let $x= aS$ where $S$ is a standard Sturmian word and $a\in \{0, 1\}$. The Sturmian word $aS$ does not admit a prefixal factorization. Indeed, $aS$ begins with  an infinite number of distinct prefixes of the form $aUb$ with $U$ bispecial and then a palindrome. It follows that $aS$ begins with arbitrarily long unbordered prefixes and hence, by Proposition \ref{PF} does not admit a prefixal factorization.
\end{proof}

We begin by reviewing some terminology which will be used in the proof of the following lemma. Let $x\in \A^\omega$ and $a\in \A.$ A word $u\in \A^+$ is called a {\it left first return} to $a$ in $x$ if $ua$ is a factor of $x$ which begins and ends with $a$ and $|ua|_a=2,$ i.e., the only occurrences of $a$ in $ua$ are as a prefix and as a suffix.  A word $u\in \A^+$ is called a {\it right first return} to $a$ in $x$ if $au$ is a factor of $x$ which begins and ends with $a$ and $|au|_a=2.$ A word $u\in \A^+$ is called a {\it complete return} to $a$ in $x$ if $u$ is a factor of $x$ which begins and ends with $a$ and $|u|_a\geq 2.$ It is a basic fact that if $u$ is a complete return to $a$ in $x,$ then $ua^{-1}$ factors uniquely as a product of left first returns to $a$ in $x$ and that $a^{-1}u$ factors uniquely as a product of right first returns to $a$ in $x.$

\begin{lemma}\label{ind}  Let $x\in \{0,1\}^\omega$ be a Sturmian word of type $a$ with $\card(UP(x))<+\infty$, and $N(x)$  the length of the longest unbordered prefix of $x.$   Then: 

\begin{enumerate}
\item[i)] If $N(x)=2,$ there exists a Sturmian word $y\in \{0,1\}^\omega$  isomorphic to $\delta(x)$ and  $x=L_a(y).$
\item[ii)] If $N(x)>2$ and $x$ begins with $a,$ there exists a Sturmian word $y\in \{0,1\}^\omega$ beginning with  $a$ such that  $x=L_a(y),$ $N(y)<N(x),$ and $\delta(x)=\delta(y).$   Moreover $L_a$ establishes a one-to-one  correspondence between $UP(y)$ and $UP(x),$ i.e., $L_a:UP(y)\rightarrow UP(x)$ is a bijection. 

\item[iii)] If $N(x)>2$ and $x$ begins with $b$ there exists a Sturmian word $y\in \{0,1\}^\omega$ beginning with $b$ such that  $x=R_a(y),$ $N(y)<N(x),$ and $\delta(x)=\delta(y).$ Moreover $R_a$ establishes a one-to-one correspondence between $UP(y)$ and $UP(x)\setminus \{b\},$ i.e., $R_a:UP(y)\rightarrow UP(x)\setminus \{b\}$ is a bijection.
\end{enumerate}
\end{lemma}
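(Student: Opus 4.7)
The proof rests on two structural facts. Since $x$ is Sturmian of type $a$ it contains $aa$ and, by balance, cannot contain $bb$; hence $x$ admits a unique block decomposition --- into $\{a,ab\}$-blocks if $x$ begins with $a$, yielding $x = L_a(y)$, or into $\{a,ba\}$-blocks if $x$ begins with $b$, yielding $x = R_a(y)$ --- and in either case $y$ is itself a Sturmian word, since $L_a$ and $R_a$ preserve the Sturmian class. Secondly, as used in the proof of Lemma~\ref{rem:sturm}, every unbordered prefix of $x$ of length greater than $1$ has the form $cUd$ with $\{c,d\}=\{a,b\}$ and $U$ a palindromic bispecial factor.

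For part (i) with $x$ beginning with $a$: if $x$ began with $aa$, then $x$ would begin with $a^kb$ for some $k\geq 2$, and $a^kb$ is unbordered of length $\geq 3$, contradicting $N(x)=2$; so $x$ begins with $ab$. Conversely, no longer unbordered prefix $aUb$ can exist, because $U$ (a nonempty palindrome beginning with $b$, since $x$ begins with $ab$) would end with $b$, making $aUb$ end with $bb$. Hence $UP'(x)=\{a,ab\}$ with $ab\prec a$, and writing $x=L_a(y)$ the block $a$ corresponds to the letter $\phi^{-1}(a)=2$ in $\delta(x)$ and the block $ab$ to $\phi^{-1}(ab)=1$; thus $y$ and $\delta(x)$ agree up to the relabeling $1\mapsto b$, $2\mapsto a$, i.e.\ $y\simeq\delta(x)$. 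The case $x$ beginning with $b$ is entirely symmetric via $R_a$.

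For parts (ii) and (iii), the core step is the claim that $L_a$ (resp.\ $R_a$) induces an order-preserving bijection between $UP(y)$ and $UP(x)$ (resp.\ $UP(x)\setminus\{b\}$). One direction is immediate: if $w$ is a border of $u$, then its image is a border of the image of $u$. For the converse in case (ii), suppose $v$ is a border of $L_a(u)$. Because $L_a$ is a prefix code with blocks $\{a,ab\}$, the only way $v$ could fail to end at a block boundary is by taking the leading $a$ of an $ab$-block, in which case $v$ ends with $a$; combined with the fact that $v$ starts with $a$ (the first letter of $L_a(u)$), a short case analysis on the first and last letters of $v$ shows that $v$ must in fact align with block boundaries at both ends. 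Hence $v=L_a(w)$ with $w$ a border of $u$, by injectivity of $L_a$. The dual argument, using that $R_a$ is a suffix code, handles case (iii), relying crucially on the fact that $y$ begins with $b$ (automatic, since $R_a(y)$ begins with $b$ iff $y$ does). Surjectivity follows because any unbordered prefix $V$ of $x$ of length $\geq 2$ must end at a block boundary: otherwise $V$ would both start and end with $a$ and hence be bordered.

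It remains to collect the auxiliary assertions. In case (ii), $y$ begins with $a$ because $N(x)>2$ forces $x$ to begin with $aa$ (by the part~(i) analysis), and $L_a(y)$ begins with $aa$ iff $y$ begins with $a$. The equality $\delta(x)=\delta(y)$ then follows from the order-preserving nature of the bijection on $UP'$. Finally, the longest unbordered prefix $u^*$ of $y$ has length $\geq 2$ --- otherwise $UP(y)=\{a\}$ forces $y=a^\omega$, contradicting the aperiodicity of Sturmian words --- so $u^*$ contains at least one $b$, which gives $N(x)=|L_a(u^*)|=|u^*|+|u^*|_b>|u^*|=N(y)$; the analogous inequality with $R_a$ handles case (iii). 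The main technical obstacle I anticipate is making the border-alignment case analysis completely rigorous, carefully ruling out every way a border $v$ could misalign with the $L_a$- or $R_a$-block decomposition.
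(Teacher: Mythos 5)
Your proposal follows essentially the same route as the paper: decompose $x$ into first-return blocks ($\{a,ab\}$ via $L_a$ or $\{a,ba\}$ via $R_a$, which is exactly your prefix/suffix-code language), observe that unbordered prefixes of length $>1$ begin and end with distinct letters, and use that to show borders must align with block boundaries, giving the bijections on $UP$ and the inequality $N(y)<N(x)$ via $|u^*|_b\geq 1$. The border-alignment analysis you defer is indeed the crux, and your sketch of it is correct and matches the paper's argument (phrased there via complete returns to $a$).

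Two points need repair. First, in part (i) your dispatch of the subcase ``$x$ begins with $b$'' as ``entirely symmetric via $R_a$'' would, if carried out, yield $x=R_a(y)$ --- which is not the conclusion $x=L_a(y)$ asserted by the lemma. The correct observation (and the paper's) is that this subcase is vacuous: if $N(x)=2$ and $x$ began with $b$, then $UP(x)=\{b,ba\}$, and since the factorization of $x$ over $UP(x)$ must use the block $b$ at least once (otherwise $x=(ba)^\omega$ is periodic), $x$ would contain $bb$, contradicting that $x$ is of type $a$. So $N(x)=2$ already forces $x$ to begin with $a$. Second, your justification that $y$ is Sturmian --- ``since $L_a$ and $R_a$ preserve the Sturmian class'' --- runs in the wrong direction: preservation gives that $y$ Sturmian implies $L_a(y)$ Sturmian, whereas you need that a Sturmian word of type $a$ \emph{desubstitutes} under $L_a$ (resp.\ $R_a$) to a Sturmian word, i.e., that the derived word by first returns to $a$ is Sturmian. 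That fact is true and standard (the paper invokes it as well known), but as stated your reason does not prove it. Neither issue affects the overall architecture, which is sound.
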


\begin{proof} To prove  i), suppose $N(x)=2.$ It follows that $UP(x)=\{c,cd\}$ where $\{c,d\}=\{0,1\}.$ Since $x$ is of type $a$ and admits a factorization over $UP(x)$ and hence contains the factor $cc$, it follows that $a=c$ and hence $UP(x)=\{a,ab\}.$   Thus the unique factorization of $x$ over $UP(x)$ is equal to the factorization of $x$ according to left first returns to $a$ which is well known to be Sturmian. Alternatively, there exists a unique Sturmian word $y$ such that $x=L_a(y).$ It follows that $\delta(x)$ is word isomorphic to $y.$

To prove   ii),  suppose $x$ begins with $a$ and its longest unbordered prefix is of length $N(x)>2.$  Then $x$ begins with $aa;$ in fact if $x$ begins with $ab,$ since $x$ is of type $a$ we would have that $UP(x)=\{a,ab\}$ contradicting our assumption that $N(x) >2.$ Having established that $x$ begins with $aa,$ it follows that there exists a unique Sturmian word $y$ such that $x=L_a(y)$ and moreover $y$ also begins with $a.$ In fact, $y$ is obtained from $x$ by factoring $x$ according to left first returns to $a$ where one codes the left first return $a$ by $a,$ and the left first return $ab$ by $b.$

Next we show that $L_a$ establishes a bijection between $UP(y)$ and $UP(x).$ We use the key fact that
if $u\in\{0,1\}^+$  and $ua$ is a factor of $x$ which begins and ends with $a,$ then there exists a unique factor 
$v$ of $y$ such that $u=L_a(v).$ In fact, $ua$ is a complete return to $a$ and hence $v$ is obtained from $u$ by factoring $u$ as a product of left first returns to $a.$ In particular, if  $u\in \{0,1\}^+$ is a factor of $x$ which begins with $a$ and ends with  $b,$ then $u=L_a(v)$ for some factor $v$ of $y.$

We begin by showing that $L_a(UP(y))\subseteq UP(x),$ i.e., that $L_a:UP(y)\rightarrow UP(x).$ 
So let $u$ be an unbordered prefix of $y.$ 
If $|u|=1,$ then $u=a$ and hence $L_a(u)=a$ which is an unbordered prefix of $x.$  If $|u|>1,$ then $u$ begins with $a$ and ends with $b,$ and hence $L_a(u)$ begins with $a$ and ends with  $b.$ If $L_a(u)$ were bordered, then any border $v$ of $L_a(u)$ would also begin with $a$ and end with $b,$ whence we can write $v=L_a(v')$ for some border $v'$ of $u,$ contradicting that $u$ is unbordered. 
Since the mapping $L_a:UP(y)\rightarrow UP(x)$ is clearly injective, to show that it is a bijection it remains to show that the mapping is surjective. 
So assume $u$ is a unbordered prefix of $x;$ we will show that 
$u=L_a(u')$ for some unbordered prefix of $y.$ This is clear if $u=a,$ in which case $u'=a.$ If $|u|>1,$ then $u$ begins with $a$ and ends with  $b$ and hence $u=L_a(u')$ for some prefix $u'$ of $y.$ Moreover, if $u'$ were bordered (say $v'$ is a border of $u')$ then $L_a(v')$ is a border of $u,$ a contradiction.

It follows that if $u$ is the longest unbordered prefix of $y,$ then $N(y)=|u|<|L_a(u)|\leq N(x)$ where the first inequality follows from the fact that $u$ must contain an occurrence of both $0$ and $1.$ Finally, since
$\card(UP(y))<+\infty,$ we have that $y$ admits a factorization $y=U_0U_1U_2\cdots$ with $U_i\in UP(y)$ which by definition is isomorphic to $\delta(y).$ 
Applying $L_a$ we obtain $$x=L_a(U_0)L_a(U_1)L_a(U_2)\cdots$$ with $L_a(U_i)\in UP(x)$ isomorphic to $\delta(x).$  Hence, $\delta(x)=\delta(y)$ as required. This completes the proof of ii).

Finally to prove  iii), suppose $x$ begins with $b.$  Then there exists a unique Sturmian word $y$ such that $x=R_a(y)$ and moreover $y$ begins with $b.$ As in the previous case, it is readily checked that $R_a:UP(y)\rightarrow UP(x)\setminus \{b\}$ is a bijection. The idea is that if $u\in\{0,1\}^+$  and $au$ is a factor of $x$ which begins and ends with $a$,  then there exists a unique factor 
$v$ of $y$ such that $u=R_a(v).$ In fact, $au$ is a complete return to $a$ and hence $u$ factors uniquely as a product of right first returns to $a.$ Thus in particular,  if  $u\in \{0,1\}^+$ is a factor of $x$ which begins with $b$ and ends with  $a,$ then $u=R_a(v)$ for some factor $v$ of $y.$
Finally, as in the previous case we deduce that $N(y)<N(x)$ and $\delta(y)=\delta(x).$\end{proof}

\begin{rema}\label{ind2} {\em We note that if $x$ is a Sturmian word with $\card(UP(x))<+\infty$ and $N(x)>2,$ then, applying repeatedly ii) and iii) of Lemma~\ref{ind}, we deduce that there exist a Sturmian word $y$ and a morphism $f\in  \{L_0,L_1,R_0,R_1\}^+$ such that $x=f(y),$ $N(y)=2$,  and $\delta(y)=\delta(x).$} 

\end{rema}

\begin{corollary}\label{deltasturm} Let $x\in \{0,1\}^\omega$ be a Sturmian word with $\card(UP(x))<+\infty.$ Then $\delta(x)$ is again Sturmian.
\end{corollary}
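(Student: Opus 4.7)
The plan is to reduce the problem to the base case $N(x)=2$, which is handled directly by part~i) of Lemma~\ref{ind}.

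In the base case, I would invoke Lemma~\ref{ind}(i) to obtain a Sturmian word $y\in\{0,1\}^\omega$ such that $y\simeq\delta(x)$. Since aperiodicity and balance (equivalently, factor complexity $\lambda(n)=n+1$) are invariant under a bijection of the two-letter alphabet, the isomorphism $y\simeq\delta(x)$ transfers the Sturmian property to $\delta(x)$. Note that in this case $n_x=2$ automatically: $UP(x)\subseteq\{a,ab\}$ and both unbordered prefixes must actually appear in the unique factorization of $x$, for otherwise $x$ would be periodic, contradicting aperiodicity of Sturmian words.

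For the general case $N(x)>2$, I would appeal to Remark~\ref{ind2}, which iterates parts~ii) and iii) of Lemma~\ref{ind} to produce a Sturmian word $y$ and a morphism $f\in\{L_0,L_1,R_0,R_1\}^+$ with $x=f(y)$, $N(y)=2$, and \emph{crucially} $\delta(y)=\delta(x)$. Applying the base case to $y$ yields that $\delta(y)$, and hence $\delta(x)$, is Sturmian.

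There is no serious obstacle, since Lemma~\ref{ind} and Remark~\ref{ind2} already do all the combinatorial work of relating $\delta(x)$ to $\delta(y)$ for a word $y$ with $N(y)=2$. The only mild conceptual point is that $\delta(x)$ lives \emph{a priori} on the abstract alphabet $\{1,\dots,n_x\}$, so "Sturmian" is naturally understood up to a relabelling of this alphabet; but the reduction leaves $\delta(x)$ unchanged and the base case forces $n_x=2$, so the identification with a binary Sturmian word is automatic.
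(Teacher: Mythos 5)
Your proposal is correct and follows essentially the same route as the paper: the base case $N(x)=2$ is settled by part~i) of Lemma~\ref{ind}, and the case $N(x)>2$ is reduced to it via Remark~\ref{ind2}, using $\delta(y)=\delta(x)$. Your additional remarks (that word isomorphism preserves the Sturmian property and that aperiodicity forces $n_x=2$) are correct elaborations of points the paper leaves implicit.
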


\begin{proof}  First suppose $N(x)=2.$ In this case the result follows immediately from  i) of Lemma~\ref{ind}.
Next suppose $N(x)>2. $ Then by Remark~\ref{ind2} there exists a Sturmian word $y$ such that $N(y)=2$ and $\delta(y)=\delta(x).$  Hence $\delta(x)$ is Sturmian.\end{proof} 

\begin{corollary} If $x$ is a standard Sturmian word, then so is $\delta(x)$.
\end{corollary}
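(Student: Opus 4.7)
The plan is to prove the corollary by induction on $N(x)$, using Lemma \ref{ind} to descend from $x$ to a smaller standard Sturmian word $y$ via the morphism $L_a$. Before starting, I verify that $\delta(x)$ is defined, i.e., that $x \in \mathcal{P}_1$. By Lemma \ref{rem:sturm}, I must rule out $x = aS$ with $a \in \{0,1\}$ and $S$ standard Sturmian. If such a factorization existed, then $S$ is an infinite suffix of the uniformly recurrent word $x$, so $\Ff(S) = \Ff(x)$; since both $x$ and $S$ are standard, every prefix of each is left special in the common factor set, and because a Sturmian word has a unique left special factor of each length, the $n$-th prefixes of $x$ and of $S$ coincide for every $n$. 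This forces $x = a^{\omega}$, contradicting aperiodicity. By Corollary \ref{deltasturm}, $\delta(x)$ is then Sturmian; it remains to show it is standard.

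The crux is the following claim: \emph{if $x$ is standard Sturmian of type $a$ and $x = L_a(y)$ for some Sturmian word $y$, then $y$ is standard.} To establish it, I fix a non-empty prefix $u$ of $y$ and show it is left special in $y$. Let $v = L_a(u)$; as $v$ is a prefix of $x$, standardness of $x$ gives $v$ left special in $x$. I then argue that every occurrence of $v$ in $x$ is aligned with the $L_a$-substitution blocks. Indeed, both $L_a(a) = a$ and $L_a(b) = ab$ begin with $a$, so $v$ begins with $a$; on the other hand, any non-aligned occurrence of $v$ would fall inside some block $L_a(b) = ab$, forcing it to begin with $b$. For an aligned occurrence of $v$ starting at the image of position $k \geq 1$ of $y$, the preceding letter in $x$ is the last letter of $L_a(y_{k-1})$, which equals $y_{k-1}$ (the last letters of $a$ and $ab$ being $a$ and $b$, respectively). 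Hence the set of letters preceding $v$ in $x$ coincides with the set of letters preceding $u$ in $y$, and left-specialness of $v$ in $x$ yields left-specialness of $u$ in $y$.

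With the key claim in hand, the induction runs smoothly. Since $x$ is standard Sturmian of type $a$, the letter $a$ is preceded in $x$ by both $a$ (via $aa$) and $b$ (via $ba$), while $b$ is preceded only by $a$; so $a$ is the unique left special factor of length one, forcing $x_0 = a$ and placing us in cases i) and ii) of Lemma \ref{ind}. For the base case $N(x) = 2$, Lemma \ref{ind}~i) yields a Sturmian $y \in \{0,1\}^\omega$ with $x = L_a(y)$ and $y \simeq \delta(x)$; by the key claim $y$ is standard, hence so is $\delta(x)$ (standardness being invariant under isomorphism of alphabets). For $N(x) > 2$, Lemma \ref{ind}~ii) supplies a Sturmian $y$ with $x = L_a(y)$, $N(y) < N(x)$, and $\delta(y) = \delta(x)$; the key claim makes $y$ standard, and the induction hypothesis applied to $y$ gives that $\delta(y) = \delta(x)$ is standard.

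The main obstacle is the alignment argument in the key claim. It depends crucially on the fact that $v = L_a(u)$ begins with $a$: this is what rules out non-aligned occurrences of $v$ in $x$ and allows left-specialness to be pulled back cleanly from $x$ to $y$. Without this alignment, an occurrence of $v$ straddling block boundaries could break the correspondence between left extensions in $x$ and left extensions in $y$.
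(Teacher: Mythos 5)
Your overall strategy --- induction on $N(x)$, descending through $x=L_a(y)$ via Lemma \ref{ind} --- is exactly the paper's; the paper, however, simply invokes the fact recorded just before Lemma \ref{rem:sturm} that a standard Sturmian word of type $a$ equals $L_a(S')$ for some standard Sturmian word $S'$, whereas you try to prove this ``key claim'' from scratch, and that is where there is a gap. The alignment half of your argument is fine: since $v=L_a(u)$ begins with $a$, every occurrence of $v$ in $x$ starts at a block boundary. What fails is the implicit converse correspondence behind the sentence ``the set of letters preceding $v$ in $x$ coincides with the set of letters preceding $u$ in $y$'': an aligned occurrence of $v=L_a(u)$ at the image of position $k$ does \emph{not} force $u$ to occur at position $k$ of $y$ when $u$ ends with $a$. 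Writing $u=u''a^{j}$ with $j\geq 1$ and $u''$ empty or ending in $b$, the trailing $a^{j}$ of $v=L_a(u'')a^{j}$ can be matched against $L_a(y_{k+|u''|}\cdots)$ with $y$ reading $u''a^{j-1}b$ rather than $u''a^{j}$, because $L_a(b)=ab$ also begins with $a$. So your argument actually shows that $u''a^{j-1}$ is left special in $y$, not that $u$ is. And the claimed equality of the two sets of preceding letters is genuinely false for factors in general: if $y$ is of type $b$, the letter $a$ is preceded in $y$ only by $b$, while $v=L_a(a)=a$ is preceded in $x=L_a(y)$ by both letters.

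The gap is repairable: it suffices to establish left-specialness for the infinitely many prefixes of $y$ ending in $b$ --- for these the occurrence correspondence is exact, since $v$ then terminates with a complete block $ab$ --- and to observe that any prefix of a left special factor is left special. With that patch your proof is complete, and indeed more self-contained than the paper's, which leaves the key claim unproved. The remaining ingredients --- the preliminary check that a standard word is not of the form $aS$, the observation that $x_0=a$ places you in cases i)/ii) of Lemma \ref{ind}, and the induction itself --- are all correct and match the paper.
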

\begin{proof} By Lemma \ref{rem:sturm} any standard Sturmian word $x$ has a prefixal factorization. Thus  $\card(UP(x))$ $< \infty$. Moreover, if $x$ is of type $a$, then it begins with the letter $a$.  The proof is then easily obtained by making induction on $N(x)$. If $N(x)=2$ by i) of
Lemma \ref{ind} there exists a Sturmian word $y$ isomorphic to $\delta(x)$ such that $x= L_a(y)$. Since $x$ is a standard Sturmian word, it follows that $y$, as well $\delta(x)$, is a standard Sturmian word. Let us now suppose $N(x)>2$.  By ii) of Lemma \ref{ind} there exists a Sturmian word $y$ such that $N(y)<N(x)$ and $x= L_a(y)$ and $\delta(y)=\delta(x)$. Since $y$ is standard, by induction $\delta(y)=\delta(x)$ is a standard Sturmian word.
\end{proof}

\begin{rema}{\em An infinite word $x$ over the alphabet $\A$ is called {\em episturmian} \cite{DJP} if it is closed under reversal  and $x$ has at most one right special factor of each length.  Corollary \ref{deltasturm}  cannot be extended to episturmian words. Indeed, as we observed in Example \ref{ex:try}, 
in the case of Tribonacci word $x$, which is an episturmian word,  $\delta(x)$ has a unique left special factor of each length and two right special factors of each length, so that $ \delta(x)$ is not episturmian.}    
\end{rema}

\begin{lemma} Let $x\in \{0,1\}^\omega$ be a Sturmian word with $\card(UP(x))<+\infty.$ Let $x=U_0U_1U_2\cdots$ be the unique factorization of $x$ over $UP(x).$ Then there exist distinct unbordered prefixes $U$ and $V$ of $x$ with $|V|<|U|$ such that $\{U_i\,|\,i\geq 0\}=\{U,V\}.$
Moreover $U$ is the longest unbordered prefix of $x$ and $V$ is the longest proper unbordered prefix of $U.$  
\end{lemma}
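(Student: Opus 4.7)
The plan is to combine Corollary~\ref{deltasturm} with an induction on $N(x)$ via Lemma~\ref{ind}. By Corollary~\ref{deltasturm}, the derived word $\delta(x)$ is Sturmian, hence $n_x=\card(UP'(x))=2$; write $UP'(x)=\{U,V\}$ with $|V|<|U|$. By construction in the proof of Proposition~\ref{PF}, the first term $U_0$ of the unique factorization is the longest element of $UP(x)$, and since $U_0\in UP'(x)=\{U,V\}$ while $U$ itself appears as some $U_j$ in the factorization (forcing $|U|\leq|U_0|$), we must have $U_0=U$. Thus $U$ is the longest element of $UP(x)$, and every other element of $UP(x)$ is a strictly shorter prefix of $x$, hence a proper prefix of $U$; the proper unbordered prefixes of $U$ therefore coincide with $UP(x)\setminus\{U\}$, and it remains to show that $V$ is the second longest element of $UP(x)$.

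This second claim I prove by induction on $N(x)$. For the base case $N(x)=2$, the length-$2$ prefix of $x$ is unbordered, so $x_0\neq x_1$; hence $UP(x)=\{x_0,x_0x_1\}$ and the conclusion holds with $U=x_0x_1$ and $V=x_0$. For $N(x)>2$, Lemma~\ref{ind}(ii) or (iii) applies according to whether $x$ begins with $a$ or $b$ (where $a$ is the type of $x$). In either case one obtains a Sturmian word $y$ with $\card(UP(y))<\infty$, $N(y)<N(x)$, and a morphism $\mu\in\{L_a,R_a\}$ with $x=\mu(y)$ such that $\mu$ restricts to a bijection $UP(y)\to UP(x)$ in case (ii) and to a bijection $UP(y)\to UP(x)\setminus\{b\}$ in case (iii). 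Moreover, applying $\mu$ to the unique factorization of $y$ over $UP(y)$ produces the unique factorization of $x$ over $UP(x)$, so $\mu$ also restricts to a bijection $UP'(y)\to UP'(x)$.

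The key observation is that $\mu$ preserves strict length order on $UP(y)$: any two elements of $UP(y)$ are prefixes of $y$, hence nested, so if $|w_1|<|w_2|$ then $w_1\leq_p w_2$ and therefore $|w_1|_b\leq|w_2|_b$; since $|\mu(w)|=|w|+|w|_b$ for $\mu\in\{L_a,R_a\}$, this gives $|\mu(w_1)|<|\mu(w_2)|$. Applying the induction hypothesis to $y$ yields $UP'(y)=\{U',V'\}$ with $V'$ second longest in $UP(y)$, so $\mu(V')$ is second longest in $\mu(UP(y))$. In case (ii) this set is already $UP(x)$; in case (iii), $\mu(UP(y))=UP(x)\setminus\{b\}$, and since $|b|=1<|\mu(w)|$ for every $w\in UP(y)$, inserting $b$ at the bottom of the length order leaves $\mu(V')$ as second longest in $UP(x)$. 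Finally, $\mu(UP'(y))=UP'(x)=\{U,V\}$ gives $V=\mu(V')$, completing the induction. The main subtlety is the length-order preservation of $\mu$ on $UP(y)$: this fails for $\mu$ on arbitrary words but holds on any set of nested prefixes, since $|\mu(w)|-|w|=|w|_b$ is monotone along such a chain.
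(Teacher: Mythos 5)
Your proof is correct and follows essentially the same route as the paper's: Corollary~\ref{deltasturm} gives $\card(UP'(x))=2$, the construction in Proposition~\ref{PF} identifies $U_0$ with the longest unbordered prefix so that $U_0=U$, and the remaining claim that $V$ is the longest proper unbordered prefix of $U$ is proved by induction on $N(x)$ via parts ii) and iii) of Lemma~\ref{ind}. The only difference is that you make explicit two points the paper leaves implicit --- that $L_a$ and $R_a$ preserve the length order on the nested chain of prefixes $UP(y)$, and that in the $R_a$ case the missing element $b$ is the shortest member of $UP(x)$ and hence cannot be its second-longest element --- which if anything tightens the argument.
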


\begin{proof} Following Corollary~\ref{deltasturm} we have that $\delta(x)$ is a Sturmian word. In particular, writing $x=U_0U_1U_2\cdots $ with $U_i\in UP(x),$ we have $\card(\{U_i\,|\,i\geq 0\}) = 2.$ Thus there exist 
distinct unbordered prefixes $U$ and $V$ of $x$ with $|V|<|U|$ such that $\{U_i\,|\,i\geq 0\}=\{U,V\}.$
Since $U_0$ is the longest unbordered prefix of $x,$ it follows that $U_0=U.$ It remains to show that $V$ is the longest proper unbordered prefix of $U.$ 

Without loss of generality we may assume that $x$ is of type $0.$ We proceed by induction on the length $N(x)$ of the longest unbordered prefix of $x$ to show that $V$ is the longest proper unbordered prefix of $U,$ where $U$ and $V$ are as above.  If $N(x)=2,$ we have that $UP(x)=\{0,01\}$ and the result follows taking $V=0$ and $U=01.$ Next suppose $N(x)>2$ and suppose that the result is true up to $N(x)-1.$  Then by ii) and iii) of Lemma~\ref{ind} it follows that there exists a Sturmian word $y$ such that $x=L_0(y)$ in case $x$ begins with $0,$ and $x=R_0(y)$ in case $x$ begins with $1.$ Moreover, again using ii) and iii) it follows that $N(y)<N(x).$ Hence, if $U'$ denotes the longest unbordered prefix of $y,$ and $V'$ denotes the longest proper unbordered prefix of $U',$ then it follows by induction hypothesis that $y$ factors over $\{U',V'\}.$ If $x$ begins with $0,$ then applying $L_0$ to this factorization we obtain a factorization of $x$ over $\{L_0(U'),L_0(V')\}$ and
 
  hence $U=L_0(U')$ and $V=L_0(V').$  Since $L_0:UP(y)\rightarrow UP(x)$ is a bijection, we deduce that $V$ is the longest proper unbordered prefix of $U.$ 
Similarly, if $x$ begins with $1,$   then applying $R_0$ to this factorization we obtain a factorization of $x$ over $\{R_0(U'),R_0(V')\}$ and hence $U=R_0(U')$ and $V=R_0(V').$  Since $R_0:UP(y)\rightarrow UP(x)\setminus \{1\}$ is a bijection and $U$  begins with $1$ and ends with  $00,$ and the shortest prefix of $U$ beginning with $1$ and ending with $0$ is a proper unbordered prefix of $U,$  we deduce that $V$ is the longest proper unbordered prefix of $U.$ \end{proof}

\begin{lemma}\label{comp}Let $y\in \{0,1\}^\omega$ be a Sturmian word,  $f\in  \{L_0,L_1,R_0,R_1\}^+$ and   set $x=f(y).$  If $x$ is singular, then $y$ is singular. 
Conversely, if  $y$ is singular and of the form $y=u'S'$, where $S'$ is a standard Sturmian word and  $u'\in \{0,1\}^+$ with $|u'|\geq 2$, then $x$ is singular. More precisely, 
there exist a standard Sturmian word $S$ and $u\in \{0,1\}^+$ with $|u'|\leq |u|$ such that $x=uS.$ Moreover, if $f$ admits at least one occurrence of either $L_0$ or $L_1,$ then $|u'|<|u|.$
\end{lemma}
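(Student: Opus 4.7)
The plan is to proceed by induction on the length of $f$ as a word over the generators $\{L_0, L_1, R_0, R_1\}$. Both claims of the lemma reduce cleanly to the case where $f$ is a single generator: for the forward direction ($x$ singular $\Rightarrow$ $y$ singular), writing $f = g \cdot f'$ with $g$ a single generator, $f(y) = g(f'(y))$ is singular, so the base case applied to $g$ gives $f'(y)$ singular and the inductive hypothesis applied to $f'$ gives $y$ singular. For the converse, a symmetric argument applies, with the hypothesis $|u'| \geq 2$ being preserved at each step (since lengths only increase) and the length inequality accumulating, becoming strict the moment some factor of $f$ equals $L_0$ or $L_1$.

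For the base case $f = L_a$ I rely on the classical fact \cite[Chap.~2]{LO2} that $L_a$ sends standard Sturmian words to standard Sturmian words. In the converse direction, applying $L_a$ to $y = u' S'$ gives $x = L_a(u') \cdot L_a(S')$; since $L_a(S')$ is standard Sturmian, uniqueness of the singular decomposition of $x$ forces $u = L_a(u')$ and $S = L_a(S')$. A direct computation yields $|u| = |L_a(u')| = |u'| + |u'|_b \geq |u'|$, with strict inequality iff $u'$ contains at least one $b$. To obtain the strict inequality under the hypothesis $|u'| \geq 2$, I would argue that the singular decomposition of any Sturmian word cannot have $u'$ equal to a monochromatic power $a^k$ with $k \geq 2$: the balance condition and uniqueness of the singular decomposition together preclude the configuration $y = a^k S'$ with $S'$ standard and $k \geq 2$, because a shorter standard suffix (for instance $a^{k-1} S'$, after checking it is itself standard) would appear, violating uniqueness.

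For the forward direction of the base case $f = L_a$: writing $x = L_a(y) = uS$ with $S$ standard, I use the unique factorization of $x$ as a concatenation of left-first-returns to $a$, i.e., the blocks $a$ and $ab$, which is precisely $L_a$ applied letter-by-letter to $y$. I split into two cases. Either $|u|$ coincides with a boundary of this factorization, so $u = L_a(u')$ and $S = L_a(y')$ with $y = u'y'$, and we conclude by showing that $L_a(y')$ standard forces $y'$ standard; or $|u|$ falls in the middle of a block $ab$, in which case $u$ ends with $a$ and $S$ begins with $b$. The second sub-case is either impossible (given that a standard Sturmian $S$ beginning with $b$ must itself be of the form $L_b$ of something, which is incompatible with being a suffix of $L_a(y)$ at this position) or reducible to the first by shifting the boundary by one letter. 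The case $f = R_a$ proceeds in parallel, using right-first-returns to $a$ and the identity $L_a(w) = a R_a(w)$ to transfer known standardness of $L_a(S')$ into the $R_a$-setting.

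The main obstacle I anticipate is the ``reflection'' step: showing that if $L_a(y')$ is standard Sturmian, then $y'$ is standard. This should follow from the fact that $L_a$ is essentially invertible on Sturmian words of type $a$ and preserves the characterization ``every prefix is left special'' in both directions; the proof of Corollary~\ref{deltasturm} relies on related manipulations that I would leverage here. A secondary subtlety is the strict-inequality refinement of the converse, which hinges on showing that no singular decomposition with $|u'| \geq 2$ has $u'$ monochromatic, for which uniqueness of the singular decomposition combined with the balance property of Sturmian words is the right tool.
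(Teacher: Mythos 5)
Your plan follows essentially the same route as the paper's proof: induction on the number of generators in $f$ to reduce to a single $L_a$ or $R_a$; in the forward direction, the block decomposition of $L_a(y)$ (resp.\ $R_a(y)$) into the return words $a,ab$ (resp.\ $a,ba$) to show that the cut in $x=uS$ can be taken at a block boundary (using that a standard word of type $a$ begins with $a$, and that $L_a^{-1}$ of a standard word of type $a$ is standard); and in the converse direction, application of the generator to $y=u'S'$ combined with the identity $aR_a(S')=L_a(S')$ to shift the boundary by one letter in the $R_a$ case. The length bookkeeping $|L_a(u')|=|u'|+|u'|_b$, with strictness governed by occurrences of $L_0,L_1$, is also exactly the paper's.

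However, one step fails as written, and it is load-bearing. You must show that $u'$ contains at least one occurrence of the letter $b$ not fixed by the generator being applied: without this, not only does the strict inequality fail for $L_a$, but even the non-strict bound $|u'|\le|u|$ fails for $R_a$, since there the new prefix is $R_a(u')a^{-1}$, of length $|u'|+|u'|_b-1$. Your proposed mechanism for excluding $u'=a^k$ with $k\ge 2$ --- that $a^{k-1}S'$ would be a second standard suffix, violating uniqueness of the singular decomposition --- does not work: $cS'$ is never standard when $S'$ is standard, because a standard word would be uniformly recurrent, so $\Ff(cS')=\Ff(S')$, and a Sturmian language has exactly one characteristic word (its prefix of length $n$ is the unique left special factor of length $n$); the check you defer in your parenthesis therefore fails. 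The correct argument, which the paper uses tersely, is that the last two letters of $u'$ must be distinct, so that $u'$ ends in $01$ or $10$ and contains both letters: if $S'$ is standard of type $e$ it begins with $e^mf$ and the longest run of $e$ in $\Ff(S')$ is $e^{m+1}$, so $eeS'$ contains the factor $e^{m+2}$, unbalanced against $fe^mf$; while $ffS'$ contains $ff$ alongside the factor $ee$ of $S'$, again violating balance. Either way $y$ would not be Sturmian. With that repair the remainder of your plan goes through and coincides with the paper's argument.
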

\begin{proof}
 Let us first  suppose that $S'$ is a standard Sturmian word and $y$ is singular and of the form $y=u'S'$ with $u'\in \{0,1\}^+$ and $|u'|\geq 2.$  Since $|u'|\geq 2,$ it follows that either $01S'$ or $10S'$ is a suffix of $y.$ In particular, $u'$ must contain an occurrence of both $0$ and $1.$ Whence for each $a\in \{0,1\}$ we have that $|u'|<|L_a(u')|$ and $|u'|<|R_a(u')|.$

For  $a\in \{0,1\}$ let  $g\in \{L_a,R_a\}$.   Taking $g=L_a$ we have $g(y)=L_a(u'S')=L_a(u')L_a(S')$ and $|u'|<|L_a(u')|.$ Taking $g=R_a$  we have $g(y)=R_a(u'S')=R_a(u')a^{-1}aR_a(S')=R_a(u')a^{-1}L_a(S')$ 
and $|u'|\leq |R_a(u')a^{-1}|.$ Since $L_a(S')$ is a standard Sturmian word, one has that $g(y)$ is singular.  Iterating we deduce that there exist  a standard Sturmian word $S$ and $u\in \{0,1\}^+$ such that $x=f(y)=uS$ and $|u'|\leq |u|$.
Moreover, if $f$ admits at least one occurrence of either $L_0$ or $L_1,$ then $|u'|<|u|.$

Let us now suppose that $x=f(y)$ is singular, i.e., $x=uS$ with $u\in \{0, 1\}^+$ and $S$ a standard Sturmian word. We wish to prove that
$y$ is singular. It suffices to prove the assertion for $f\in \{L_a, R_a\}$, $a\in \{0,1\}$.  Let us first take $f= L_a$. One has that $x$, as well as $S$,  is a Sturmian word of type $a$ beginning with the letter $a$. Hence the word $u$ either ends with the letter $b$ or ends with the letter $a$ immediately followed by the letter $a$. Thus setting $S'= L_a^{-1}(S)$ and $u'=  L_a^{-1}(u)$, one has $y= u'S'$. Since $S'$ is a standard Sturmian word, one has that $y$ is singular.

Let us now take $f= R_a$.  One has that $x$, as well as $S$,  is a Sturmian word of type $a$. We have to consider two cases. Case 1. The word $u$ ends with the letter $a$. We can set $u'= R_a^{-1}(u)$. Since the first letter of $S$ is $a$,  we can write $y= u'aS'$ with
$S= R_a(aS')= aR_a(S')= L_a(S')$. This implies that $S'$ is a standard Sturmian word and that $y$ is singular. Case 2. The word $u$ ends with the letter $b$. Since $S$ begins with the letter $a$, we can write $x= u_1baS''$ with  $S= aS''$ and $u= u_1b$, where the word  $u_1$ if it is different from $\varepsilon$,  ends with the letter $a$.  Setting $u'= R_a^{-1}(u_1)$ one has that  $y= u'bS'$ where $bS= R_a(bS')= baR_a(S')= bL_a(S')$. Thus $S= L_a(S')$ and 
$S'$ is a standard word. From this it follows that $y$ is singular.
\end{proof}

\begin{rema}{\em We note that the assumption in the preceding lemma that $|u'|\geq 2$ is actually necessary. For instance, if $y$ is singular and of the form $y=aS'$ with $a\in \{0,1\}$ and $S'$ standard, then $R_a(y)=aR_a(S')=L_a(S')$ is nonsingular. } 
\end{rema}

\begin{thm}\label{Sturmsing}Let $x$ be a Sturmian word. Then $x\in\P_\infty$ if and only if $x$ is nonsingular.
\end{thm}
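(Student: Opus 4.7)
The plan is to establish both directions of the equivalence through a common structural result: for any singular Sturmian word $x \in {\cal P}_1$ with (unique) decomposition $x = uS$ where $S$ is standard Sturmian and $|u| \geq 2$, either $\delta(x) \notin {\cal P}_1$, or $\delta(x)$ is itself singular with $\delta(x) = u'S'$ ($S'$ standard) satisfying the strict inequality $|u'| < |u|$.

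To prove this structural result, I invoke Remark~\ref{ind2} when $N(x) > 2$, or Lemma~\ref{ind}\,i) directly when $N(x) = 2$, to write $x = F(z)$ where $F \in \{L_0, L_1, R_0, R_1\}^+$ contains at least one occurrence of some $L_a$ and $z$ is a Sturmian word isomorphic to $\delta(x)$. Iterating the converse direction of Lemma~\ref{comp} along the factorization of $F$ shows $z$ is singular; write $z = u_2 S_2$ with $S_2$ standard. Transferring through the alphabet isomorphism $z \simeq \delta(x)$, one gets $\delta(x) = u'S'$ with $S'$ standard and $|u'| = |u_2|$. If $|u_2| = 1$ then $\delta(x)$ has the form $cS'$ with $c$ a single letter, so $\delta(x) \notin {\cal P}_1$ by Lemma~\ref{rem:sturm}; if $|u_2| \geq 2$, the original direction of Lemma~\ref{comp} applied to $x = F(z)$, noting that $F$ contains at least one $L$-morphism, yields the strict inequality $|u| > |u_2| = |u'|$.

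The $(\Rightarrow)$ direction then follows immediately: if $x$ is singular Sturmian then either $x \notin {\cal P}_1$ (done), or $|u| \geq 2$ and iterating the structural result produces a strictly decreasing sequence of prefix lengths; this sequence must reach prefix length $1$ in finitely many steps, at which point the corresponding $\delta$-iterate lies outside ${\cal P}_1$, whence $x \notin {\cal P}_\infty$. For $(\Leftarrow)$ I argue by contrapositive: assume $x \in {\cal P}_1$ and $\delta(x)$ is singular, and set up the same decomposition $x = F(z) = f(L_a(z))$ with $z \simeq \delta(x)$ singular and $z = u_2 S_2$. When $|u_2| \geq 2$, the original direction of Lemma~\ref{comp} directly produces $x$ singular. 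The residual sub-case $|u_2| = 1$ requires more care since Lemma~\ref{comp}'s original direction does not apply: writing $z = cS_2$ and setting $y := L_a(z)$, if $c = a$ then $y = aL_a(S_2)$ has the form $aS''$ with $S''$ standard, so $y \notin {\cal P}_1$ by Lemma~\ref{rem:sturm}, contradicting the fact that $N(y) = 2$ built into the construction; if $c = b$ then $y = abL_a(S_2)$ is singular with prefix length $2$, and a final application of Lemma~\ref{comp} to $x = f(y)$ (or directly $x = y$ when $f$ is empty) forces $x$ to be singular. Thus in every sub-case $x$ is singular, contradicting nonsingularity; an induction on $n$ therefore yields $\delta^n(x) \in {\cal P}_1$ for every $n \geq 0$ whenever $x$ is nonsingular, so $x \in {\cal P}_\infty$.

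The principal obstacle is the boundary case $|u_2| = 1$, where Lemma~\ref{comp}'s original direction (which requires $|u'| \geq 2$) is unavailable. In the $(\Rightarrow)$ direction this case is actually favorable since it immediately places $\delta(x)$ outside ${\cal P}_1$, but in $(\Leftarrow)$ it demands the extra sub-case analysis above, where the $N(y) = 2$ constraint coming from Lemma~\ref{ind}\,i) is precisely what rules out the problematic configuration $c = a$.
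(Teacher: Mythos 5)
Your proof is correct and follows essentially the same route as the paper's: both directions rest on the decomposition $x=f(L_a(z))$ with $z\simeq\delta(x)$ supplied by Lemma~\ref{ind} and Remark~\ref{ind2}, on transferring singularity in both directions through Lemma~\ref{comp}, and on a descent on the length of the singular prefix. Your repackaging of the paper's two inductions (on $n$ and on $|u|$) into a single structural lemma plus iteration, and your explicit first-letter case analysis for $|u_2|=1$ where the paper instead observes that $y\in{\cal P}_1$ forces the prefix of $y$ to have length at least $2$, are only organizational differences.
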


\begin{proof} We begin by showing that if $x\notin \P_\infty$ then $x$ is singular. 
For this we prove by induction on $n,$ that if $x\notin \P_n,$ then $x$ is singular. For $n=1,$ we have that if $x\notin \P_1,$ then by Lemma \ref{rem:sturm},  $x=aS$ where $a\in \{0,1\}$ and $S$ is a standard Sturmian word. Thus $x$ is singular.
Next let $n\geq 2,$ and suppose by inductive hypothesis that if $y$ is a Sturmian word and $y\notin \P_{n-1},$ then $y$ is singular. Let $x$ be a Sturmian word with $x\notin \P_n.$ By inductive hypothesis we can suppose $x\in \P_{n-1}.$ In particular, $x\in \P_1.$ If $N(x)=2,$ then by i) of Lemma~\ref{ind} we have $x=L_a(y)$ where $a\in \{0,1\}$ and  $y$ is Sturmian isomorphic to $\delta(x).$ Since $x\notin \P_n$ it follows that $\delta(x)\notin \P_{n-1},$ whence $y\notin \P_{n-1}.$  Hence by induction hypothesis $y$ is singular. Thus we can write $y=uS$ where $u\in \{0,1\}^+$ and $S$ a standard Sturmian word. Thus $x=L_a(y)=L_a(uS)=L_a(u)L_a(S)$ and since $L_a(S)$ is a standard Sturmian word, we deduce that $x$ is singular as required.

If $N(x)>2,$ then, following Remark~\ref{ind2},  there exist a Sturmian word $y$ and a morphism $f\in  \{L_0,L_1,R_0,R_1\}^+$ such that $x=f(y),$ $N(y)=2$, and $\delta(y)=\delta(x).$ 
In particular, $\delta(y)\notin \P_{n-1}$ and hence $y\notin \P_{n}.$ 
Thus applying i) of Lemma~\ref{ind} as above, we deduce that $y$ is singular.  Hence there exist $u\in \{0,1\}^+$ and a standard Sturmian word $S$ such that  $y=uS.$ On the other hand, since $\delta(y)$ is defined (or equivalently $y\in \P_1)$ we must have  $|u|\geq 2.$ 
Thus applying Lemma~\ref{comp} we deduce that $x$ is singular.

Conversely, suppose $x\in \{0,1\}^\omega$ is a Sturmian word of the form $x=uS$ with $u\in \{0,1\}^+$ and $S$ a standard Sturmian word. We will show by induction on $|u|$ that $x\notin \P_\infty.$ If $|u|=1,$ i.e., $x=aS$ for some $a\in \{0,1\},$ then, by Lemma \ref{rem:sturm},  $x\notin \P_1,$
whence $x\notin \P_\infty.$  Next let $n\geq 2$ and assume by induction hypothesis that if $y$ is a Sturmian word of the form $y=u'S'$ where $S'$ is a standard Sturmian word, and $u'\in \{0,1\}^+$ with $|u'|<n,$ then $y\notin \P_\infty.$ Let $x$ be a Sturmian word of the form $x=uS$ with $S$ standard, $u\in \{0,1\}^+$ and $|u|=n.$ Since $n\geq 2$, by Lemma \ref{rem:sturm},  $x$ admits a prefixal factorization so that $\card(UP(x))<\infty$. We consider two cases. 
If $N(x)=2,$ then by i) of Lemma~\ref{ind} we can write $x=L_a(y)$ where $a\in \{0,1\}$ and $y$ is Sturmian and isomorphic to $\delta(x).$ 
Since $x$ is singular, it follows by Lemma~\ref{comp} that $y$ is singular. Thus we can write $y=u'S'$ for some $u'\in \{0,1\}$ and some standard Sturmian word $S'.$ If $|u'|=1,$ then $y\notin \P_1,$ whence $\delta(x)\notin \P_1$ and hence $x\notin \P_\infty.$ If $|u'|\geq 2,$ again by Lemma~\ref{comp} we deduce that $|u'|<|u|$ and hence by induction hypothesis we conclude that $y\notin \P_\infty.$ Hence $\delta(x)\notin \P_\infty$ whence $x\notin \P_\infty.$  

Finally suppose $N(x)>2.$ Then by Remark~\ref{ind2} there exist a Sturmian word $y,$ and a morphism $f\in \{L_0,L_1,R_0,R_1\}^+$ such that $x=f(y)$ and $\delta(x)=\delta(y)$ and $N(y)=2.$ By i) of Lemma~\ref{ind}
there exists a Sturmian word $y'$ isomorphic to $\delta(y)$ such that $y=L_a(y').$  
Thus $x=f\circ L_a(y')$ and $y'$ is isomorphic to $\delta(x).$ Since $x$ is singular, by Lemma~\ref{comp} we deduce that $y'$ is singular. Thus we can write $y'=u'S'$ for some $u'\in \{0,1\}^+$ and some standard Sturmian word $S'.$ If $|u'|=1,$ then $y'\notin \P_1,$ whence $\delta(x)\notin \P_1$ and hence $x\notin \P_\infty.$
If $|u'|\geq 2,$ then by Lemma~\ref{comp} we deduce that $|u'|<n,$ and hence by induction hypothesis we conclude that $y\notin \P_\infty.$ Hence $\delta(x)\notin \P_\infty$ whence $x\notin \P_\infty.$ \end{proof}

\end{document}